\newtheorem{thm}{Theorem}[section]
\newtheorem{lem}[thm]{Lemma}
\newtheorem{conj}[thm]{Conjecture}
\newtheorem{prop}[thm]{Proposition}
\theoremstyle{remark}
\newtheorem{rem}[thm]{Remark}
\theoremstyle{definition}
\numberwithin{equation}{section}
\DeclareMathOperator{\Pic}{Pic}
\DeclareMathOperator{\supp}{supp}
\DeclareMathOperator{\Sym}{Sym}
\DeclareMathOperator{\Proj}{Proj}
\DeclareMathOperator{\sing}{sing}
\DeclareMathOperator{\Bs}{Bs}
\DeclareMathOperator{\Span}{Span}
\DeclareMathOperator{\IM}{Im}
\begin{document}

\vfuzz0.5pc
\hfuzz0.5pc 

\newcommand{\claimref}[1]{Claim \ref{#1}}
\newcommand{\thmref}[1]{Theorem \ref{#1}}
\newcommand{\propref}[1]{Proposition \ref{#1}}
\newcommand{\lemref}[1]{Lemma \ref{#1}}
\newcommand{\coref}[1]{Corollary \ref{#1}}
\newcommand{\remref}[1]{Remark \ref{#1}}
\newcommand{\conjref}[1]{Conjecture \ref{#1}}
\newcommand{\questionref}[1]{Question \ref{#1}}
\newcommand{\defnref}[1]{Definition \ref{#1}}
\newcommand{\secref}[1]{Sec. \ref{#1}}
\newcommand{\ssecref}[1]{\ref{#1}}
\newcommand{\sssecref}[1]{\ref{#1}}

\def \red{{\mathrm{red}}}
\def \tors{{\mathrm{tors}}}
\def \EQ{\Leftrightarrow}

\def \mapright#1{\smash{\mathop{\longrightarrow}\limits^{#1}}}
\def \mapleft#1{\smash{\mathop{\longleftarrow}\limits^{#1}}}
\def \mapdown#1{\Big\downarrow\rlap{$\vcenter{\hbox{$\scriptstyle#1$}}$}}
\def \smapdown#1{\downarrow\rlap{$\vcenter{\hbox{$\scriptstyle#1$}}$}}
\def \A{{\mathbb A}}
\def \I{{\mathcal I}}
\def \J{{\mathcal J}}
\def \CO{{\mathcal O}}
\def \C{{\mathcal C}}
\def \BC{{\mathbb C}}
\def \BQ{{\mathbb Q}}
\def \m{{\mathcal M}}
\def \H{{\mathcal H}}
\def \S{{\mathcal S}}
\def \Z{{\mathcal Z}}
\def \BZ{{\mathbb Z}}
\def \W{{\mathcal W}}
\def \Y{{\mathcal Y}}
\def \T{{\mathcal T}}
\def \P{{\mathbb P}}
\def \CP{{\mathcal P}}
\def \G{{\mathbb G}}
\def \F{{\mathbb F}}
\def \BR{{\mathbb R}}
\def \D{{\mathcal D}}
\def \L{{\mathcal L}}
\def \f{{\mathcal F}}
\def \E{{\mathcal E}}
\def \BN{{\mathbb N}}
\def \N{{\mathcal N}}
\def \X{{\mathcal X}}
\def \hom{{\mathcal Hom}}

\def \closure#1{\overline{#1}}
\def \EQ{\Leftrightarrow}
\def \imply{\Rightarrow}
\def \isom{\cong}
\def \embed{\hookrightarrow}
\def \tensor{\mathop{\otimes}}
\def \wt#1{{\widetilde{#1}}}
\def \ol#1{\overline{#1}}

\title{On Vojta's $1+\varepsilon$ Conjecture}

\author{Xi Chen}

\address{632 Central Academic Building\\ 
University of Alberta\\
Edmonton, Alberta T6G 2G1, CANADA}
\email{xichen@math.ualberta.ca}

\date{Dec 7, 2006}

\maketitle

\section{Introduction}

In \cite{V1} and \cite{V2}, P. Vojta conjectured that

\begin{conj}[$1+\varepsilon$ Conjecture]\label{CONJ001}
Let $\pi: X\to B$ be a flat family of projective curves over a projective curve
$B$ with connected fibers. Suppose that $X$ has at worst quotient singularities.
Then for every $\varepsilon > 0$, there exists a constant $N_\varepsilon$ such
that
\begin{equation}\label{E001}
\omega_{X/B} \cdot C \le (1 + \varepsilon) (2g(C)-2) + N_\varepsilon (X_b \cdot C)
\end{equation}
for every irreducible curve $C\subset X$ that dominates $B$, where $\omega_{X/B}$ is the
relative dualizing sheaf of $X/B$, $X_b$ is a general fiber of $X/B$
and $g(C)$ is the geometric genus of $C$.
\end{conj}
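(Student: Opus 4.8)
The plan is to reduce \eqref{E001} to a lower bound for the self-intersection $C^{2}$ on a smooth surface, to prove that bound at the smooth fibres by a Riemann--Hurwitz computation, and to extract the sharp constant $1+\varepsilon$ from the behaviour of $C$ at the singular fibres. First I would replace $X$ by the minimal resolution $\mu\colon\wt X\to X$ of its quotient singularities. These are log terminal, so $\mu^{*}\omega_{X/B}=\omega_{\wt X/B}(-E)$ for an effective $\BQ$-divisor $E$ supported on the exceptional locus with coefficients bounded in terms of $X$ alone; as geometric genus is a birational invariant and $\mu_{*}\wt C=C$, the projection formula reduces \eqref{E001} for $(X,C)$ to \eqref{E001} for $(\wt X,\wt C)$ after enlarging $N_{\varepsilon}$ by a fixed multiple of $X_{b}\cdot C$. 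Contracting vertical $(-1)$-curves I may also assume $\pi$ relatively minimal. Finally, pick a base change $\rho\colon B'\to B$ of a fixed degree $\delta_{0}=\delta_{0}(X)$ realising semistable reduction so that $X\times_{B}B'$ has at worst canonical singularities, and let $X'\to B'$ be a resolution; then $\omega_{X'/B'}$ equals the pullback of $\omega_{X/B}$ plus an effective exceptional divisor, the strict transform $C'$ of $C\times_{B}B'$ has $X'_{b'}\cdot C'=X_{b}\cdot C$ and $2g(C')-2=\delta_{0}(2g(C)-2)+O(X_{b}\cdot C)$ (the error being the ramification of $C'\to C$ over the finitely many branch points of $\rho$), so \eqref{E001} over $B$ follows from \eqref{E001} over $B'$. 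Hence I may assume $X$ is a smooth projective surface and $\pi$ a relatively minimal semistable fibration.

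Adjunction on $X$ together with $\omega_{X/B}=\omega_{X}\otimes\pi^{*}\omega_{B}^{\vee}$ then yields
\[
\omega_{X/B}\cdot C=\bigl(2g(C)-2\bigr)+2\delta(C)-C^{2}-(2g(B)-2)(X_{b}\cdot C),\qquad\delta(C):=p_{a}(C)-g(C)\ge0,
\]
so \eqref{E001} is equivalent to the self-intersection inequality
\[
C^{2}\ \ge\ 2\delta(C)-\varepsilon\bigl(2g(C)-2\bigr)-N'_{\varepsilon}(X_{b}\cdot C);
\]
that is, a horizontal curve with many singularities or with high tangency to the fibres must have large self-intersection. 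A version of this with $1+\varepsilon$ replaced by a fixed constant $c(X)$ should follow from Vojta's inequality and its refinements (a Bogomolov--Miyaoka--Yau type bound for $X$ together with its vertical boundary), so the real issue is improving the constant.

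To organise the estimate I would bound $2\delta(C)-C^{2}$ fibre by fibre, recording over each $b\in B$ the singularities of $C$ over $b$, the local ramification of the normalisation $\nu\colon\wt C\to B$ over $b$, and, when $X_{b}$ is singular, the local branches of $C$ through its nodes. Over the smooth fibres this is a tame local computation; summing over them and applying Riemann--Hurwitz to $\nu$ shows that the smooth fibres account for the main term $2g(C)-2$ up to $O(X_{b}\cdot C)$. The whole difficulty is then concentrated at the finitely many singular fibres $X_{b_{0}}$: there $C$ can pass through the nodes with many local branches, so its contribution to $2\delta(C)$ is a priori superlinear in $X_{b}\cdot C$, and it is exactly this that forces $1+\varepsilon$ instead of $1$.

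The point to prove is that a curve interacting complicatedly with a singular fibre is forced to have correspondingly large geometric genus, i.e. that the total singular-fibre defect is $\le\varepsilon(2g(C)-2)+O_{\varepsilon}(X_{b}\cdot C)$. I would attack this by deformation theory on the surface: a curve violating this bound would move in a family of dimension larger than $\chi$ of the normal sheaf of $\wt C\hookrightarrow X$ permits (via Riemann--Roch on $\wt C$ and the positivity of $\omega_{X/B}$ along the fibres), which is impossible. To upgrade the resulting fixed constant to $1+\varepsilon$ I would run this argument not on $C$ itself but on its image in a tower of well-chosen auxiliary covers --- a Kodaira--Parshin type family built by taking, over each point of a (multi)section, a cyclic cover of the fibre branched at that point together with fixed sections, or cyclic covers of $X$ branched along vertical divisors --- and optimise over the degree of the cover, whose reciprocal supplies the $\varepsilon$. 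Making the singular-fibre defect of the lifted curve provably $O_{\varepsilon}(X_{b}\cdot C)$ uniformly up the tower, and carrying out the deformation-theoretic dimension count with positivity sharp enough to yield the constant $1$ in the limit, is where I expect essentially all the work of the conjecture to lie.
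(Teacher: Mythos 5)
Your reduction to a smooth, relatively minimal, semistable fibration and then to the self-intersection inequality
\[
C^{2}\ \ge\ 2\delta(C)-\varepsilon\bigl(2g(C)-2\bigr)-N'_{\varepsilon}(X_{b}\cdot C)
\]
is a legitimate reformulation (modulo a sign: for the minimal resolution of a log terminal surface one has $\mu^{*}\omega_{X/B}=\omega_{\wt X/B}+E$ with $E$ effective, not $\omega_{\wt X/B}(-E)$, though this does not derail the bookkeeping). The route you propose from there, however, is not the paper's and, more importantly, the part of it that would actually deliver the sharp constant is left as a program rather than proved. You write that a curve violating the desired bound ``would move in a family of dimension larger than $\chi$ of the normal sheaf... permits, which is impossible,'' but there is no reason a single offending $C$ must move at all; a Riemann--Roch/Severi-type dimension count controls generic members of a family, not an arbitrary curve, and turning that heuristic into a uniform estimate over all horizontal curves is precisely the content of the $2+\varepsilon$ theorem, which already requires the jet-space lift rather than deformation theory. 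Likewise the Kodaira--Parshin tower of cyclic covers ``whose reciprocal supplies the $\varepsilon$'' is a well-known dream, but you yourself concede that making the singular-fibre defect $O_{\varepsilon}(X_{b}\cdot C)$ uniformly up the tower and extracting the constant $1$ in the limit ``is where essentially all the work of the conjecture lies.'' That is exactly the part that needs an argument, and none is supplied; so this is a genuine gap, not a difference of style.

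For contrast, the paper does not localise the defect at singular fibres at all. It reduces the (log) statement to a pair $(\P^{1}\times B,\,D)$ by projecting $X$ to $\P^{1}\times B$ via a sufficiently general pencil and absorbing the branch locus into $D$; it then lifts $C$ to its canonical section of the log jet space $\P T_{X}(-\log D)$, so that the problem becomes a degree bound for the tautological bundle $L$, and the obstruction to positivity of $(1+\varepsilon)L-\pi_{X}^{*}(\omega_{X/B}+D)$ is concentrated along the divisor $\Delta\subset\P T_{X}(-\log D)$ parametrising vertical tangency. The sharp $1+\varepsilon$ is then extracted analytically: a carefully built Bergman metric on $m(1+\varepsilon)L+m\pi_{B}^{*}M$, chosen so that the associated sections vanish to order two along $\Delta$, has positive relative curvature near $\Delta$ (Lemma \ref{LEM001}), and the integral of the curvature over the lift of $C_{n}$ away from $\Delta$ is $O(\deg C_{n})$ by compactness (Lemma \ref{LEM903}). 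None of this resembles a Kodaira--Parshin or deformation-theoretic argument, and the paper's mechanism for getting the constant down to $1$ is the tangency-order-two Bergman construction, not an optimisation over covers. If you wanted to pursue your route, the places to start filling the gap are: (i) a precise statement, with proof, of what bound on the singular-fibre defect a dimension count actually yields for an individual curve, and (ii) an explicit construction of the tower together with a computation of how the defect of the lifted curve transforms, since cyclic covers branched along vertical divisors typically \emph{worsen} the singularities of the total space and hence the discrepancy bookkeeping.
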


\begin{rem}\label{REM500}
From the number-theoretical point of view, one can think of $X$ as an algebraic curve
$X_k$ over
the function field $k = K(B)$ and the multi-section $C\subset X$ as an algebraic point $p_C$
on $X_{\ol{k}} = X_k\tensor \ol{k}$. The logarithmic height $h(p_C)$ and discriminant
$\Delta(p_C)$ of $p_C$ are defined to be
\begin{equation}\label{E500}
h(p_C) = \frac{\omega_{X/B} \cdot C}{\deg(K(C)/K(B))} \text{ and }
\Delta(p_C) = \frac{2g(C) - 2}{\deg(K(C)/K(B))}
\end{equation}
respectively, where $\deg(K(C)/K(B)) = X_b\cdot C$, obviously. With these notations,
\eqref{E001} can be put in the form
\begin{equation}\label{E501}
h(p_C) \le (1+\varepsilon) \Delta(p_C) + N_\varepsilon
\end{equation}
Note that the definition of the height $h(p_C)$ depends on the choice of the birational
model $X$ of $X_k$. However, it is not hard to see that \eqref{E501} holds regardless of
the choice of the birational model (see below).
\end{rem}

Vojta proved that \eqref{E001} holds with $1+\varepsilon$ replaced by
$2+\varepsilon$. This conjecture was settled recently by K. Yamanoi \cite{Y}. M. McQuillan
later gave an algebro-geometric proof in \cite{M}. However, we find his proof quite hard to follow.
Inspired by his idea, we will give another proof of this conjecture and generalize it to
the log case. Compared to his proof, ours is more elementary.

It seems natural to study a (generalized) log version of the $1+\varepsilon$
conjecture. For a log variety $(X, D)$ and a curve $C\subset X$ that meets $D$ properly,
we define $i_X(C, D)$ to be the number of the points in $\nu^{-1}(D)$, where $\nu: \wt{C}\to
C\subset X$ is the normalization of $C$.

\begin{conj}\label{CONJ002}
Let $\pi: X\to B$ be a flat family of projective curves over a projective curve
$B$ with connected fibers. Suppose that $X$ has at worst quotient singularities and
$D\subset X$ is a reduced effective divisor on $X$.
Then for every $\varepsilon > 0$, there exists a constant $N_\varepsilon$ such that
\begin{equation}\label{E002}
(\omega_{X/B} + D) \cdot C \le (1+\varepsilon) (2g(C)-2 + i_X(C, D)) + N_\varepsilon (X_b \cdot C)
\end{equation}
for every irreducible curve $C\subset X$ that dominates $B$ and $C\not\subset D$.
\end{conj}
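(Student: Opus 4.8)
The plan is to combine a chain of standard reductions with Vojta's relative logarithmic tautological inequality --- which alone already gives \eqref{E002} with the constant $1+\varepsilon$ weakened to $2+\varepsilon$ --- and then to recover the sharp constant by analysing the one geometric configuration in which that inequality fails to be sharp, in the spirit of McQuillan's proof but aiming to keep the foliation input to a minimum.

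\emph{Reductions.} First I would pass to a convenient model: resolving the singularities of $X$, making a finite base change $B'\to B$, and replacing $X$ by a semistable model, I may assume $X$ smooth, $\pi$ semistable, and $D+\pi^{-1}(\Delta)$ a simple normal crossings divisor, where $\Delta\subset B$ is the locus of critical values. By \remref{REM500} each such operation changes \eqref{E002} only in ways absorbed into the constant $N_\varepsilon$ together with a harmless adjustment of $\varepsilon$ --- the one point needing care being that a semistable base change inflates $2g(C)-2$ by at most a bounded amount per section of the pulled-back family, again absorbed into $N_\varepsilon$. I would replace $C$ by its normalization $\nu\colon\wt C\to X$, set $\phi=\pi\circ\nu\colon\wt C\to B$ (finite of degree $d=X_b\cdot C$), dispose separately of the isotrivial case (there $\omega_{X/B}$ is a pullback from the fixed fibre and \eqref{E002} is Riemann--Hurwitz) and of genus $0$ and $1$ fibres (elementary), and so assume the fibres have genus $\ge2$ and $\pi$ is non-isotrivial, whence $\omega_{X/B}+D$ is big on $X$. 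Throughout I work logarithmically, the case $D=\varnothing$ being \conjref{CONJ001}.

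\emph{The tautological inequality.} Lifting $\wt C$ by its tangent directions to a morphism into an appropriate projectivized logarithmic cotangent bundle $\ol X$ over $X$, and using the section of $\ol X\to X$ coming from the fibration to identify the tautological class with $\omega_{X/B}+D$, Vojta's tautological inequality gives
\begin{equation*}
(\omega_{X/B}+D)\cdot C\;\le\;\bigl(2g(C)-2+i_X(C,D)\bigr)+N\,(X_b\cdot C),
\end{equation*}
with $N=N(X,B,D)$, \emph{unless} the lifted curve is contained in the ``bad locus'' $Z\subset\ol X$; without that restriction one only controls the left side up to the extra term $\deg R_\phi$ ($R_\phi$ the ramification divisor of $\phi$), which by Riemann--Hurwitz is at most $(2g(C)-2)+O(X_b\cdot C)$ --- and this is exactly Vojta's $2+\varepsilon$ bound. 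So everything comes down to the curves $C$ whose lift lies in $Z$, and these are precisely the algebraic leaves of a holomorphic foliation $\mathcal{F}$ on $X$ distinct from the fibration $\pi$.

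\emph{The foliated case and the main obstacle.} This is the step I expect to be the genuine difficulty. Following McQuillan, I would reduce the singularities of $\mathcal{F}$ (so that $D+\pi^{-1}(\Delta)$ becomes $\mathcal{F}$-invariant with reduced contacts), relate $\omega_{X/B}+D$ to the logarithmic foliated canonical class $K_{\mathcal{F}}$ and the normal bundle $N_{\mathcal{F}}$ via $\omega_{X/B}+D=(K_{\mathcal{F}}+D)-N_{\mathcal{F}}-\pi^*\omega_B$, and use logarithmic foliated adjunction $(K_{\mathcal{F}}+D)\cdot C\le 2g(C)-2+i_X(C,D)$; it then remains to bound $(\omega_{X/B}+D)\cdot C$ against $2g(C)-2+i_X(C,D)$ with factor $1+\varepsilon$ and error $O(X_b\cdot C)$, according to the position of $K_{\mathcal{F}}$ in the cone of divisors. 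If $K_{\mathcal{F}}$ is not pseudoeffective then $\mathcal{F}$ is a rationally connected (conic-bundle-type) foliation, so $C$ is a rational leaf --- which can only occur over a rational base and is dispatched directly; if $K_{\mathcal{F}}$ is pseudoeffective one invokes McQuillan's index theorem for foliated surfaces of general type --- applicable because $\omega_{X/B}+D$, hence $K_{\mathcal{F}}$ up to the above correction, is big here --- which is exactly where the $\varepsilon$ is spent, while the remaining algebraically integrable subcases are handled by passing to the fibration by leaves of $\mathcal{F}$ and inducting. The real work lies in keeping every correction term bounded by $O(X_b\cdot C)$ rather than by something growing with $g(C)$, in making the positivity estimate for $(X,\mathcal{F})$ effective, and in closing the induction without circularity --- in particular controlling the auxiliary fibrations $X\to Y$ that arise, and the fact that $\mathcal{F}$ is not unique.
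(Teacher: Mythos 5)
Your proposal is not the paper's route (the paper avoids foliations entirely: it first reduces, via a generic projection and stable reduction, to the case $X=\P^1\times B$ with $D$ a normal-crossing union of sections, and then proves the inequality there by lifting the curves to $\P T_X(-\log D)$ and constructing Bergman metrics from sections doubly tangent along the locus $\Delta$ of vertical directions, together with an analytic control of how the sequence $C_n$ clusters over $\pi(D_{\sing})$). More importantly, as an argument in McQuillan's style it has a genuine gap at exactly the point you flag. Your dichotomy after the tautological inequality is not correct as stated: the inequality $\deg\nu_Y^*L\le 2g(C)-2+i_X(C,D)$ holds unconditionally, and the loss of the sharp constant comes from comparing the tautological class $L$ with $\pi_X^*(\omega_{X/B}+D)$; this comparison degrades only along the vertical-direction locus $\Delta\subset\P T_X(-\log D)$, which (away from $D_{\sing}$) is the section corresponding to the fibration $\pi$ itself. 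A curve whose lift actually lies in that locus is contained in a fiber and is excluded, so the problematic curves are multisections whose lifts merely concentrate near $\Delta$ (large vertical tangency spread over the base); they need not be leaves of any auxiliary foliation $\mathcal{F}\ne\pi$. Indeed, a foliation on a surface that is not a fibration has only finitely many algebraic leaves, so an infinite sequence of potential counterexample curves $C_n$ cannot be explained as leaves of a fixed $\mathcal{F}$ except in the very special algebraically integrable case. Consequently the ``foliated case'' you defer to McQuillan's index theorem does not capture the actual hard case, and the step you yourself call the genuine difficulty --- keeping all corrections $O(X_b\cdot C)$, making the positivity effective, closing the induction --- is left unproved.

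A second concrete error: in the log setting you cannot dispose of genus $0$ and $1$ fibres as elementary. The paper's reduction (Proposition 2.4/2.5) shows the case $Y=\P^1\times B$ with $R$ a union of sections is \emph{equivalent} to the general conjecture; it is the core of the problem (it corresponds to Yamanoi's second main theorem for small functions), not a side case. Only when $(\omega_{X/B}+D)\cdot X_b\le 0$ is the statement trivial. So after your reductions the substantive content of \eqref{E002} is precisely the low-genus logarithmic case you set aside, and the proposal as written does not prove it.
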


\section{Reduction to $(\P^1\times B, D)$}

As a first step in our proof, we will reduce \conjref{CONJ002} to the case $(\P^1\times B,
D)$. This is also what was done in Yamanoi's proof \cite{Y}.

It is not hard to see that \eqref{E002} continues to hold after applying birational
transforms and/or base changes to $X/B$. That is, we have

\begin{lem}\label{LEM901}
Let $\pi: X\to B$ and $D$ be given as in \conjref{CONJ002}.
\begin{enumerate}
\item Let $f: X'\dashrightarrow X$ be a birational morphism and $D'$ be the proper transform of $D$ under $f$.
Then \eqref{E002} holds for $(X, D)$ if and only if it holds for $(X', D')$.
\item Let $B'\to B$ be a finite map from a smooth projective curve $B'$ to $B$, $f: X' = X\times_B B' \to X$ be the base change of the family $X$ and $D' = f^{-1}(D)$. Then
\eqref{E002} holds for $(X, D)$ if and only if it holds for $(X', D')$.
\end{enumerate}
The constants $N_\varepsilon'$ for $(X', D')$, though, might be different from $N_\varepsilon$ for $(X, D)$. 
\end{lem}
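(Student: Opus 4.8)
The plan is to prove both statements by comparing the two sides of \eqref{E002} for $(X,D)$ and for $(X',D')$ term by term, and checking that passing between them multiplies $2g(C)-2+i_X(C,D)$ and $X_b\cdot C$ by a controlled factor while changing everything else by at most a fixed constant times $X_b\cdot C$. Since \eqref{E002} tolerates an additive error of a constant multiple of $X_b\cdot C$ and leaves the coefficient $1+\varepsilon$ untouched, such comparisons suffice, with the new constant equal to $N_\varepsilon$ (respectively $\deg(B'/B)\,N_\varepsilon$) plus a quantity depending only on $f$; both implications of each ``iff'' then follow at once, because the relations used are exact identities up to two-sided, or nonnegative and boundable, error terms.

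\emph{Part (1).} Pass between a curve on one side and its image, respectively strict transform, $C$ on the other. As $f$ is birational and $C$ dominates $B$, the restriction $f|_{C'}\colon C'\to C$ is birational, so $\wt C=\wt{C'}$; in particular $g(C)=g(C')$ and $X_b\cdot C=X'_b\cdot C'$. Quotient singularities are $\mathbb Q$-Gorenstein, so $K_{X'/B}=K_{X'/X}+f^{*}K_{X/B}$ with $K_{X'/X}=\sum_i a_i E_i$ a $\mathbb Q$-divisor supported on the exceptional locus of $f$; since $f$ is fixed there are finitely many $E_i$, say $r$ of them, and $|a_i|\le A(f)$. The exceptional locus of a birational morphism of surfaces is contracted to finitely many points of $X$, so each $E_i$ lies in a fibre of $X'\to B$, hence $E_i\cdot C'\le X'_b\cdot C'=X_b\cdot C$, and therefore $\bigl|\,\omega_{X'/B}\cdot C'-\omega_{X/B}\cdot C\,\bigr|=\bigl|\sum_i a_i(E_i\cdot C')\bigr|\le A(f)\,r\,(X_b\cdot C)$. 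The same verticality bounds $|D\cdot C-D'\cdot C'|$, since $f^{*}D=D'+\sum_i m_i E_i$ with $m_i\ge0$, and it bounds $|i_X(C,D)-i_{X'}(C',D')|$: as $D$ and $D'$ agree away from the centre of $f$, the two marking sets $\nu^{-1}(D)$ and $(\nu')^{-1}(D')$ on $\wt C=\wt{C'}$ differ only at points lying over $\bigcup_i E_i$, of which there are at most $\sum_i(E_i\cdot C')\le r\,(X_b\cdot C)$. Inserting these four estimates into whichever instance of \eqref{E002} is assumed yields the other.

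\emph{Part (2).} Let $C'$ be a component of $C\times_B B'$ dominating $B'$ (respectively set $C=f(C')$), let $e=\deg(f|_{C'}\colon C'\to C)$ and $d=\deg(B'/B)$. Formation of the relative dualizing sheaf commutes with base change, so $\omega_{X'/B'}=f^{*}\omega_{X/B}$ — in particular $\mathbb Q$-Cartier — whence $\omega_{X'/B'}\cdot C'=e\,(\omega_{X/B}\cdot C)$; likewise $X'_{b'}\cdot C'=\deg(C'/B')=\tfrac{e}{d}\,(X_b\cdot C)$, and $f^{*}D=D'+V$ with $V\ge0$ supported on fibres of $X'\to B'$ over the branch points of $B'\to B$, so $D'\cdot C'=e\,(D\cdot C)-V\cdot C'$ with $0\le V\cdot C'\le(\mathrm{const})\,(X_b\cdot C)$. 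Because $D'=f^{-1}(D)$, the marking set upstairs satisfies $(\nu')^{-1}(D')=\phi^{-1}(\nu^{-1}(D))$, where $\phi\colon\wt{C'}\to\wt C$ is the degree-$e$ map induced by $f|_{C'}$; splitting the ramification of $\phi$ over $\nu^{-1}(D)$ from the rest, the Riemann--Hurwitz formula gives the identity
\begin{equation*}
2g(C')-2+i_{X'}(C',D')=e\bigl(2g(C)-2+i_X(C,D)\bigr)+\deg R^{\circ},
\end{equation*}
where $R^{\circ}\ge0$ is the part of the ramification divisor of $\phi$ supported away from $\nu^{-1}(D)$. Since $\phi$ is \'etale wherever $B'\to B$ is, $R^{\circ}$ is supported over the finitely many points of $\wt C$ above the branch locus of $B'\to B$, so $\deg R^{\circ}\le(d-1)\,\#\{\text{branch points of }B'/B\}\,(X_b\cdot C)$. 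Feeding these relations into the assumed instance of \eqref{E002} and using $1\le e\le d$ to absorb the resulting factors of $e$ and $d$ into the additive error produces the other instance.

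The one genuinely delicate point is the Riemann--Hurwitz bookkeeping in Part (2): one must check that $D'=f^{-1}(D)$ \emph{exactly}, so that the set of marked points of $\wt{C'}$ is precisely the $\phi$-preimage of that of $\wt C$ with no spurious points, and that $\phi$ can ramify only where the ramification of $B'\to B$ forces it to. These two facts are what make $\deg R^{\circ}$ both nonnegative — so the inequality points the right way when \eqref{E002} is assumed for $(X,D)$ — and bounded by a constant times $X_b\cdot C$ — so it is harmless when \eqref{E002} is assumed for $(X',D')$. The remaining ingredients — verticality of the $f$-exceptional curves in Part (1) and base-change compatibility of the relative dualizing sheaf in Part (2) — are standard.
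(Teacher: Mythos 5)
Your proof is correct and follows the same basic strategy as the paper's: compare both sides of \eqref{E002} for the two pairs term by term and show the discrepancies are bounded by a constant multiple of $X_b\cdot C$, which is exactly the kind of error \eqref{E002} absorbs. There are two small but genuine organizational differences worth noting. In Part (1) the paper reduces to the case of a single blowup and writes $f^*(\omega_{X/B}+D)=\omega_{X'/B}+D'+rE$ for a single exceptional $E$, while you work with the whole discrepancy divisor $K_{X'/X}=\sum a_iE_i$ at once; both routes rest on the same observation that exceptional curves are vertical, so each $E_i\cdot C'\le X_b\cdot C$, and yours avoids the factorization step. In Part (2) the paper sets $C'=f^*(C)$, which is in general reducible, and only asserts the three two-sided estimates \eqref{E907}--\eqref{E909}; you instead pass through a single irreducible component $C'$ of $C\times_BB'$ of degree $e$ over $C$, and a Riemann--Hurwitz computation with $(\nu')^{-1}(D')=\phi^{-1}(\nu^{-1}(D))$ gives the exact identity $2g(C')-2+i_{X'}(C',D')=e\bigl(2g(C)-2+i_X(C,D)\bigr)+\deg R^\circ$ with $R^\circ\ge 0$ supported over the branch locus. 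The sign and support information this exact identity carries makes both directions of the ``iff'' transparent and makes the source of the bound $\deg R^\circ=O(X_b\cdot C)$ explicit, which is a modest sharpening of what the paper leaves to the reader.
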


\begin{proof}
For part (1), it is enough to argue for $X'$ being the blowup of $X$ at one point $p$.
Let $C'\subset X'$ be the proper transform of $C\subset X$. Then
\begin{equation}\label{E903}
(\omega_{X/B} + D) \cdot C = (\omega_{X'/B} + D' + r E) \cdot C'
\end{equation}
for some constant $r$, where $E$ is the exceptional divisor of $f$.
On the other hand, we have
\begin{equation}\label{E904}
E\cdot C' \le X_b' \cdot C' = X_b \cdot C = \deg (C)
\end{equation}
where $X_b'$ and $X_b$ are the fibers of $X'$ and $X$ over a point $b\in B$, respectively.
Consequently,
\begin{equation}\label{E905}
|(\omega_{X/B} + D) \cdot C - (\omega_{X'/B} + D') \cdot C'| \le |r| \deg C
\end{equation}
Also it is obvious that $g(C) = g(C')$ and
\begin{equation}\label{E906}
| i_X(C, D) - i_{X'}(C', D') | \le E\cdot C' \le \deg(C)
\end{equation}
Then part (1) follows from \eqref{E905} and \eqref{E906}.

For part (2), let $d$ be the degree of the map $B'\to B$, $R\subset B'$
be its ramification locus and $\mu_r$ be the ramification index of a point $r\in R$. Let $C' = f^*(C)$.
It is not hard to see that
\begin{equation}\label{E907}
|d(\omega_{X/B} + D)\cdot C - (\omega_{X'/B'} + D')\cdot C'| \le \sum_{r\in R} (\mu_r - 1) \deg(C)
\end{equation}
\begin{equation}\label{E908}
|d(2g(C) - 2) - (2g(C') - 2)| \le \sum_{r\in R} (\mu_r - 1) \deg C
\end{equation}
and
\begin{equation}\label{E909}
|d (i_X(C, D)) - i_{X'}(C', D')| \le \sum_{r\in R} (\mu_r - 1) \deg C
\end{equation}
Then part (2) follows from \eqref{E907}-\eqref{E909}.
\end{proof}

\begin{rem}\label{REM502}
We see from the above lemma that \eqref{E501} holds regardless of the choice of birational
models $X$.
\end{rem}

\begin{rem}\label{REM901}
If $(\omega_{X/B} + D)
\cdot X_b \le 0$, \eqref{E002} is trivially true. So we may assume that
\begin{equation}\label{E927}
(\omega_{X/B} + D)\cdot X_b > 0.
\end{equation}
We may also assume that $D$ meets every fiber properly.
Using the above lemma, we can apply the stable reduction to $(X, D)$ and make $X$ into a
family of stable curves with marked points $X_b\cap D$ on each fiber.
The resulting $X$ has at worst quotient singularities and $\omega_{X/B}
+ D$ is relatively ample over $B$.
\end{rem}

\begin{prop}\label{PROP001}
If \eqref{E002} fails for some $(X, D)$, then there exists $\delta > 0$ and a log pair
$(Y, R)$ such that
\eqref{E002} fails with $(X, D, \varepsilon)$ replaced by $(Y, R, \delta)$,
where $R$ is a reduced effective divisor on $Y = \P^1\times B$.
\end{prop}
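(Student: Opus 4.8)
The plan is to start from a family $(X,D)$ for which \eqref{E002} fails, and, using \lemref{LEM901}, first apply stable reduction as in \remref{REM901} so that we may assume $X/B$ is a family of stable pointed curves with marked points $X_b\cap D$, that $X$ has quotient singularities, and that $\omega_{X/B}+D$ is relatively ample. The failure of \eqref{E002} means there is a fixed $\varepsilon>0$ and an infinite sequence of curves $C_n\subset X$ dominating $B$, not contained in $D$, with
\begin{equation*}
(\omega_{X/B}+D)\cdot C_n > (1+\varepsilon)\bigl(2g(C_n)-2+i_X(C_n,D)\bigr) + n\,(X_b\cdot C_n).
\end{equation*}

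Next I would pass to a projective embedding of the stable family. Since $\omega_{X/B}+D$ is relatively ample, a suitable power gives a relative embedding $X\embed \P^N\times B$ over $B$; equivalently, choosing $m$ large and using $m(\omega_{X/B}+D)$ one maps the generic fiber $X_k$ into $\P^N_k$. The key point is that the failure inequality is essentially invariant (up to a change of $\varepsilon$ and $N_\varepsilon$) under any map to $\P^N\times B$ that is finite onto its image on each fiber: the left side $(\omega_{X/B}+D)\cdot C$ is comparable, up to bounded multiples of $\deg C = X_b\cdot C$, to $\CO_{\P^N}(1)\cdot(\text{image of }C)$, while $g(C)$ is unchanged and $i_X(C,D)$ changes by at most a bounded multiple of $\deg C$ (as in the estimates \eqref{E905}, \eqref{E906}). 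So it suffices to produce the counterexample inside $\P^N\times B$ with $R$ the image of $D$ together with whatever extra ramification/branch divisor the map introduces.

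Then I would cut down the dimension to $\P^1$. The generic fiber $X_k\subset\P^N_k$ is a curve, and a general linear projection $\P^N_k\dashrightarrow\P^1_k$ restricts to a finite morphism $\phi: X_k\to\P^1_k$; spreading out, we get a generically finite $B$-morphism $X\dashrightarrow\P^1\times B$, and after a birational modification of $X$ (permitted by \lemref{LEM901}(1)) we may assume it is a morphism, finite on each fiber. Let $Y=\P^1\times B$ and let $R$ be the reduced divisor supported on $\phi(D)$, the branch locus of $\phi$ on each fiber, and any horizontal components needed so that $R$ meets every fiber properly. For $C_n\subset X$ let $C_n'=\phi(C_n)\subset Y$. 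Again $g(C_n')\le g(C_n)$, $\deg C_n'$ is comparable to $\deg C_n$, and by Riemann–Hurwitz applied fiberwise the quantity $2g(C_n)-2+i_X(C_n,D)$ is bounded above by $2g(C_n')-2+i_Y(C_n',R)$ plus a bounded multiple of $\deg C_n$ — here one uses that ramification of $C_n\to C_n'$ sits over the branch divisor $R$. Combining these comparisons, the failure inequality for $(X,D,\varepsilon)$ pushes forward to a failure inequality for $(Y,R,\delta)$ with some $0<\delta<\varepsilon$ (the loss absorbing the bounded error terms, which are uniformly $O(X_b\cdot C_n)=O(\deg C_n)$ and hence swallowed by taking $n$ large).

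The main obstacle is the bookkeeping in the last step: one must check that the error terms relating $(\omega_{X/B}+D)\cdot C$ to $\CO_{\P^1}(1)\cdot C'$ and relating $2g(C)-2+i_X(C,D)$ to $2g(C')-2+i_Y(C',R)$ are genuinely bounded by a constant times $X_b\cdot C$ independent of $C$, and that no ramification of $C\to C'$ is "lost" (i.e. that every point of $\nu_{C}^{-1}(D)$ maps to a point counted in $i_Y(C',R)$, which is why $R$ must include $\phi(D)$). Once these uniform bounds are in hand, choosing $\delta=\varepsilon/2$, say, and discarding finitely many $C_n$ gives the desired counterexample on $(\P^1\times B, R)$, proving the proposition.
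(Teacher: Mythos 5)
Your overall strategy---stable reduction, project $X$ to $\P^1\times B$, take $R$ to be the image of $D$ together with the branch divisor, and transfer the failure of \eqref{E002}---is essentially the paper's (a pencil in $|L|$ for $L$ sufficiently ample \emph{is} a general linear projection to $\P^1$). However, there is a genuine gap in your quantitative bookkeeping. You assert that all error terms are ``uniformly $O(X_b\cdot C_n)=O(\deg C_n)$ and hence swallowed by taking $n$ large.'' This is false, and it is precisely the reason $\delta$ must be taken strictly smaller than $\varepsilon$. Writing $\phi^* R_Y = 2R_X + R_\phi$ and $\phi^*\Delta = D + D_\phi$ as in \eqref{E106} and \eqref{E112}, the discrepancy between the $(Y,R)$ and $(X,D)$ sides of the inequality is governed by $(R_X + R_\phi + D_\phi)\cdot C_n$. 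The divisor $R_X$ lies (up to vertical twist) in the linear system of $\omega_{X/B}-\phi^*\omega_{Y/B}$, so $R_X\cdot C_n$ grows like the height $(\omega_{X/B}+D)\cdot C_n$, not like $\deg C_n$. Were this error really $O(\deg C_n)$, it would be absorbed into $N_\varepsilon$ and one could keep $\delta=\varepsilon$; the fact that the slope must degrade is a signal that the error is height-sized.

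The paper handles this by the ampleness estimate \eqref{E006}, $(R_X+R_\phi+D_\phi)\cdot C \le \gamma(\omega_{X/B}+D+\beta X_b)\cdot C$, which bounds the discrepancy by a fixed \emph{multiple} of the height plus $O(\deg C)$. Plugging this into \eqref{E111} produces a prefactor $(1-\delta\gamma)$ in front of the original failure quantity, and the precise choice $\delta = \varepsilon/\big((1+\varepsilon)\gamma+1\big)$ in \eqref{E250} is what makes the algebra close; no amount of ``taking $n$ large'' does the job. You should also arrange (as the paper does by choosing the pencil generally) that $\phi$ maps each $C_n$ birationally onto $\Gamma_n$, so that $g(\Gamma_n)=g(C_n)$ exactly; your inequality $g(C_n')\le g(C_n)$ is the right direction but introduces a multiplicative drop on the left side $(\omega_{Y/B}+R)\cdot\Gamma_n$ relative to $(\omega_{X/B}+D)\cdot C_n$ if $\deg(\phi|_{C_n})>1$, which your sketch does not account for. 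Finally, you need the inequalities $i_Y(\Gamma_n, R_Y)\le (R_X+R_\phi)\cdot C_n$ and $i_Y(\Gamma_n,\Delta)-i_X(C_n,D)\le D_\phi\cdot C_n$ (the paper's \eqref{E108}--\eqref{E109}) to control the new boundary contacts; invoking ``Riemann--Hurwitz applied fiberwise'' is pointing in the right direction but is too vague to certify these.
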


\begin{proof}
By the above remark, we may assume that $X$ is a family of stable curves with marked
points $X_b \cap D$. In particular, $\omega_{X/B} + D$ is relatively ample over $B$.

Since \eqref{E002} fails for $(X, D)$, there exists a sequence of irreducible
curves $C_1, C_2, ..., C_n, ... \subset X$ such that
\begin{equation}\label{E009}
\lim_{n\to\infty} \left(\frac{(\omega_{X/B} + D) \cdot C_n}{X_b\cdot C_n} -
\frac{(1 + \varepsilon)(2g(C_n)-2 + i_X(C_n, D))}{X_b\cdot C_n}\right) = \infty
\end{equation}

Taking a sufficiently ample line bundle $L$ on $X$, we can map $X\to \P^1$ with a very general
pencil in $|L|$. Combining this with the projection $X\to B$, we obtain a rational map
$\phi: X\dashrightarrow Y = B\times\P^1$. We can make the following happen by
taking $L$ sufficiently ample and the pencil sufficiently general:
\begin{itemize}
\item The indeterminancy locus $I_\phi$ of $\phi$ consists
of $L^2$ distinct points on $X$, $I_\phi\cap C_n = \emptyset$ for all $n$ and 
$I_\phi\cap D = \emptyset$.
\item Outside of $I_\phi$, $\phi$ is finite. Let $R_X\subset X$ be the closure of the
ramification locus of $\phi: X\backslash I_\phi\to Y$, $R_Y = \ol{\phi(R_X)}$ be the proper
transform of $R_X$ and
\begin{equation}\label{E106}
\phi^* R_Y = 2 R_X + R_\phi
\end{equation}
outside of $I_\phi$, where $R_\phi\subset X$ is a reduced effective divisor on $X$.
\item $\phi$ is simply ramified along $R_X$ with multiplicity $2$.
\item $\phi$ maps $C_n$ and $D$ birationally to $\Gamma_n = \phi(C_n)$ and
$\Delta = \phi(D)$, respectively, for all $n$.
\end{itemize}

Since $\phi_* C_n = \Gamma_n$, we have
\begin{equation}\label{E105}
\phi^* (\omega_{Y/B} + R_Y + \Delta) \cdot C_n = (\omega_{Y/B} + R_Y + \Delta) \cdot \Gamma_n
\end{equation}
On the other hand,
\begin{equation}\label{E107}
\begin{split}
&\quad \phi^* (\omega_{Y/B} + R_Y + \Delta) \cdot C_n\\
& = (\phi^* \omega_{Y/B} + 2 R_X + R_\phi +
\phi^* \Delta) \cdot C_n\\
&= (\phi^* \omega_{Y/B} + R_X + D) \cdot C_n  + (R_X + R_\phi)\cdot C_n + D_\phi \cdot C_n
\end{split}
\end{equation}
where
\begin{equation}\label{E112}
\phi^*\Delta = D + D_\phi
\end{equation}
for some effective divisor $D_\phi\subset X$. By Riemann-Hurwitz,
\begin{equation}\label{E110}
\omega_{X/B} = \phi^*\omega_{Y/B} + R_X
\end{equation}
holds outside of $I_\phi$. Meanwhile, it is obvious that
\begin{equation}\label{E108}
(R_X + R_\phi)\cdot C_n \ge i_Y(\Gamma_n, R_Y)
\end{equation}
and
\begin{equation}\label{E109}
D_\phi \cdot C_n \ge i_Y(\Gamma_n, \Delta) - i_X(C_n, D)
\end{equation}
Combining \eqref{E105} through \eqref{E109}, we obtain
\begin{equation}\label{E111}
\begin{split}
&\quad (\omega_{Y/B} + R_Y + \Delta) \cdot \Gamma_n - (1 +\delta)
\big(2g(\Gamma_n) - 2 + i_Y(\Gamma_n, R)\big)\\
& \ge 
(\omega_{X/B} + D)\cdot C_n - (1+ \delta) \big(2g(C_n) - 2 + i_X(C_n, D)\big)\\
&\quad - \delta (R_X + R_\phi + D_\phi) C_n
\end{split}
\end{equation}
where $R = R_Y\cup \Delta$.
Since $\omega_{X/B} + D$ is relatively ample over $B$,
there exist constants $\beta$ and $\gamma > 0$ such that
\begin{equation}\label{E006}
(R_X + R_\phi + D_\phi) C \le \gamma (\omega_{X/B} + D + \beta X_b) C
\end{equation}
for all curves $C\subset Y$. Thus, it suffices to take 
\begin{equation}\label{E250}
\delta = \frac{\varepsilon}{(1+\varepsilon)\gamma + 1}
\end{equation}
Then
\begin{equation}\label{E007}
\begin{split}
&\quad (\omega_{X/B} + D)\cdot C_n - (1+ \delta) \big(2g(C_n) - 2 + i_X(C_n, D)\big)\\
&\quad\quad\quad - \delta (R_X + R_\phi + D_\phi)\cdot C_n\\
&\ge  (1 - \delta \gamma)  (\omega_{X/B} + D)\cdot C_n - 
(1+ \delta) \big(2g(C_n) - 2 + i_X(C_n, D)\big) \\
&\quad\quad\quad - \beta\gamma\delta (X_b\cdot C_n)\\
&= (1 - \delta \gamma) \bigg( (\omega_{X/B} + D)\cdot C_n - (1 +
\varepsilon) \big(2g(C_n) - 2 + i_X(C_n, D)\big) \bigg)\\
&\quad\quad\quad - \beta\gamma\delta (X_b\cdot C_n)
\end{split}
\end{equation}
Therefore,
\begin{equation}\label{E008}
\lim_{n\to\infty} \left(\frac{(\omega_{Y/B} + R) \cdot \Gamma_n}{Y_b \cdot
\Gamma_n} - (1 + \delta) \frac{2 g(\Gamma_n) - 2
+ i_Y(\Gamma_n, R)}{Y_b \cdot \Gamma_n}\right) = \infty
\end{equation}
and \propref{PROP001} follows.
\end{proof}

In the above proof, we have quite an amount of freedom to choose the map $X\dashrightarrow
\P^1$. We can make $R$ really ``nice'' by choosing $L$ and the pencile of $L$ sufficiently ``general''.

\begin{prop}\label{PROP003}
Let $S$ be a finite set of points on $B$.
In the proof of \propref{PROP001}, for a sufficiently ample $L$ and a general pencil
$\sigma\subset |L|$ that maps $X\dashrightarrow \P^1$, the corresponding divisor $R = R_Y +
\Delta\subset Y = \P^1\times B$ has the
following properties:
\begin{itemize}
\item For every fiber $Y_b$ of $Y/B$,
\begin{equation}\label{E979}
i_Y(Y_b, R) \ge Y_b\cdot R - 1
\end{equation}
and if the equality holds, $b\in B\backslash S$ and $X_b$ is disjoint from the base locus
$\Bs(\sigma)$ of $\sigma$;
\item $R$ is a divisor of normal crossing.
\end{itemize}
\end{prop}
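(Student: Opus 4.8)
The plan is to recast both assertions as finitely many genericity conditions on the pair $(L,\sigma)$ and to verify each one separately. Fix a very ample $L$ on $X$ with $L^{2}$ large; the pencils in $|L|$ are parametrized by the Grassmannian $G=G(2,H^{0}(X,L))$, and for each ``bad'' phenomenon I will exhibit a proper closed (or at worst non-dominating) subset of $G$ containing all $\sigma$ that exhibit it, so that a general $\sigma$ avoids all of them at once. As a preliminary step, since $L$ is very ample one takes $\Bs(\sigma)$ to consist of $L^{2}$ reduced points in general position on $X$; shrinking $G$ to a dense open, $\Bs(\sigma)$ is then disjoint from $\sing(X)$, from $D$, from each $C_{n}$, from the finitely many singular fibers of $X/B$ and from the fibers $X_{b}$ with $b\in S$, and the pencil behaves like a local coordinate at each of its base points.

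For the normal crossing property, recall that $R_{Y}$ is the closure of the branch locus of the generically finite morphism $\phi\colon X\setminus I_{\phi}\to Y$ and that $\Delta=\phi(D)$. A relative Bertini/Lefschetz-pencil argument shows that for general $\sigma$ the ramification curve $R_{X}$ is smooth away from $I_{\phi}$ and that $\phi|_{R_{X}}$ is birational onto $R_{Y}$; hence $R_{Y}$ acquires at worst nodes. Likewise $\phi|_{D}$ is an immersion, so $\Delta$ is at worst nodal, and $R_{Y}$ meets $\Delta$ transversally and away from $I_{\phi}$. Cusps of $R_{Y}$ or $\Delta$, triple points of $R=R_{Y}+\Delta$, and tangencies between its components are each cut out by a proper condition on $\sigma$, hence excluded; this gives the second bullet.

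For the fiber-wise inequality, first let $b\in B$ be such that $X_{b}$ is smooth, $b\notin S$ and $X_{b}\cap\Bs(\sigma)=\emptyset$. Then $\sigma$ restricts to a pencil in the very ample linear system $\bigl|L|_{X_{b}}\bigr|$ on $X_{b}$, and for general $\sigma$ this is a \emph{general} such pencil, so it is base-point-free, simply branched, and its $2g(X_{b})-2+2\deg(L|_{X_{b}})$ branch points are distinct; these are exactly the points of $R_{Y}\cap Y_{b}$, and the points of $\Delta\cap Y_{b}$ (the images of $X_{b}\cap D$) are distinct from them. Thus $i_{Y}(Y_{b},R)=Y_{b}\cdot R$, a strict inequality. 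It remains to treat the finitely many special fibers: the singular fibers of $X/B$, the fibers meeting $\Bs(\sigma)$, and the fibers over $S$. A local analysis shows that a defect $Y_{b}\cdot R-i_{Y}(Y_{b},R)$ can arise only (a) at a node $q$ of $X_{b}$, where $d\pi|_{q}=0$, so $R_{X}$ runs through $q$ transversally to both branches and $\phi(R_{X})$ is tangent to $Y_{b}$ at $\phi(q)$ to order exactly $2$ (checked in coordinates where $\phi$ near $q$ is the map $(x,y)\mapsto(x+y,xy)$), contributing $1$; or (b) where two simple branch points of $\sigma|_{X_{b}}$ collide as $b$ varies, again a simple tangency of $R_{Y}$ to $Y_{b}$, contributing $1$. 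For general $\sigma$ the collisions in (b) occur over general points of $B$, which one arranges to miss $S\cup\pi(\Bs(\sigma))\cup\{\,b:X_{b}\text{ singular}\,\}$, and no two defect-producing events land on the same fiber; hence the defect on each fiber is at most $1$, and when it equals $1$ the fiber is of type (a) or (b), so $b\in B\setminus S$ and $X_{b}\cap\Bs(\sigma)=\emptyset$. This gives the first bullet.

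The main obstacle is exactly this last, local, step: establishing the normal forms of $\phi$, $R_{X}$, $R_{Y}$ and $\Delta$ near a node of a fiber and near a base point of the pencil, and then running the dimension count over $G$ that confines the genuinely degenerate configurations --- two collisions on one fiber, a collision landing on $S$ or on a base-point fiber, a tangency of $\Delta$ to a fiber, a cusp or triple point of $R$, a base point in an awkward position, and so on --- to proper subvarieties of the parameter space. Once these normal forms and codimension estimates are in place, the proposition follows by intersecting the finitely many resulting dense open subsets of $G$.
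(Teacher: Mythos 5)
You take the same overall route as the paper: parametrize pencils by the Grassmannian $\G(1,|L|)$, cut out each degenerate configuration by a proper (for fixed $b$, codimension two) condition, and intersect the resulting dense opens; this is exactly what the paper does with its ten displayed sets \eqref{E980}--\eqref{E984}. However, your enumeration of defect-producing events for the first bullet is incomplete, and this is a genuine gap. You assert that a defect $Y_b\cdot R - i_Y(Y_b,R)$ can arise only (a) at a node of $X_b$ or (b) where two simple branch points of $\sigma|_{X_b}$ collide; both of these are tangencies of $R_Y$ to $Y_b$. But $R = R_Y + \Delta$ also contains $\Delta = \phi(D)$, and the defect also jumps by $\ge 1$ whenever two points of $D\cap X_b$ have the same image under $\phi_\sigma$ (a node of $\Delta$), or when the image of a point of $D\cap X_b$ coincides with a branch point (an intersection point of $\Delta$ with $R_Y$): in either case two transverse branches of $R$ hit $Y_b$ at the same point. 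These are precisely the configurations excluded by the paper's conditions \eqref{E980}, \eqref{E981}, \eqref{E982}, with \eqref{E804}, \eqref{E805} pairing them with the base-locus condition. Since your ``no two defect-producing events land on the same fiber'' argument, and the ``if equality holds then $b\notin S$ and $X_b\cap\Bs(\sigma)=\emptyset$'' conclusion, are both run over the incomplete list (a)--(b), they do not by themselves establish \eqref{E979}; you need to add the $\Delta$-events to the list and pair each one with membership of $b$ in $S$ and with $X_b\cap\Bs(\sigma)\ne\emptyset$, exactly as the paper does.

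Two smaller points. First, the statement that for a general $\sigma$ the restriction $\sigma|_{X_b}$ is a \emph{general} pencil on $X_b$ for every smooth $b$ with $b\notin S$ and $X_b\cap\Bs(\sigma)=\emptyset$ is too strong: a one-parameter family of pencils necessarily meets the non-generic locus over finitely many such $b$, and those are precisely the fibers where (b) and the $\Delta$-events occur. You do return to those fibers afterwards, so this is a phrasing issue rather than a fatal one, but the framing ``only finitely many special fibers remain, namely singular/base-locus/$S$-fibers'' hides the extra finite set you actually need to handle. Second, for the normal-crossing bullet, your Bertini-plus-explicit-exclusion approach is workable, but the paper's monodromy argument --- transitivity of the monodromy on the intersection points $\Delta_{\sigma,i}\cap\Delta_{\sigma,j}$ and so on, so that it suffices to exhibit a single transverse configuration --- sidesteps several of the codimension estimates you would otherwise need and is worth adopting.
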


\begin{proof}
Let $\G(k, |L|)$ be the Grassmanian $\{\P^k \subset |L|\}$. For each pencil $\sigma\in \G(1,
|L|)$, we use the notation $\phi_\sigma$ for the rational map $X\dashrightarrow Y$ induced by
$\sigma$ and $R_{X,\sigma}$ for the closure of its ramification locus.
Let $\phi_{\sigma, b}: X_b\to \P^1$ be the restriction of $\phi_\sigma$ to $X_b$ and let
$R_{X,\sigma,b} = R_{X,\sigma}\cap X_b$ be the ramification locus of $\phi_{\sigma,b}$.

For $L$ sufficiently ample and for each $b\in B$, we see by simple dimension counting that all of
\begin{equation}\label{E980}
\begin{split}
\{ \sigma: &\ \phi_\sigma(p_1) = \phi_\sigma(p_2) = \phi_\sigma(p_3)\\
& \text{ for three
  distinct points } p_1,p_2,p_3\in D\cap X_b \},
\end{split}
\end{equation}
\begin{equation}\label{E981}
\begin{split}
\{ \sigma: &\ \phi_\sigma(p_1) = \phi_\sigma(p_2) = \phi_\sigma(p_3)\\
& \text{ for } p_1\ne p_2\in D\cap X_b \text{ and } p_3\in R_{X,\sigma,b}\},
\end{split}
\end{equation}
\begin{equation}\label{E804}
\begin{split}
\{ \sigma: &\ \phi_\sigma(p_1) = \phi_\sigma(p_2) \text{ and } X_b\cap \Bs(\sigma)\ne\emptyset,\\
& \text{ for } p_1\ne p_2\in D\cap X_b\}
\end{split}
\end{equation}
\begin{equation}\label{E982}
\begin{split}
\{ \sigma: &\ \phi_\sigma(p_1) = \phi_\sigma(p_2), \ \text{where}\ p_1\in D\cap X_b\
 \text{and}\\
&\phi_{\sigma,b} \text{ ramifies at } p_2\in R_{X,\sigma,b} \text{ with index $\ge 3$}\},
\end{split}
\end{equation}
\begin{equation}\label{E805}
\begin{split}
\{ \sigma: &\ \phi_\sigma(p_1) = \phi_\sigma(p_2) \text{ and } X_b\cap \Bs(\sigma)\ne\emptyset,\\
& \text{where}\ p_1\in D\cap X_b\ \text{and}\ p_2\in R_{X,\sigma,b}\},
\end{split}
\end{equation}
\begin{equation}\label{E985}
\begin{split}
\{ \sigma: &\ \phi_\sigma(p_1) = \phi_\sigma(p_2), \ \text{where}\ p_1\ne p_2\in R_{X,\sigma,b}\
 \text{and}\\
&\phi_{\sigma,b} \text{ ramifies at } p_2 \text{ with index $\ge 3$}\},
\end{split}
\end{equation}
\begin{equation}\label{E806}
\begin{split}
\{ \sigma: &\ \phi_\sigma(p_1) = \phi_\sigma(p_2) \text{ and } X_b\cap \Bs(\sigma)\ne\emptyset,\\
&\text{where}\ p_1\ne p_2\in R_{X,\sigma,b}\},
\end{split}
\end{equation}
\begin{equation}\label{E983}
\begin{split}
\{ \sigma: &\ \phi_{\sigma,b} \text{ ramifies at } p_1\ne p_2\in
R_{X,\sigma,b} \text{ with indices $\ge 3$}\}
\end{split}
\end{equation}
\begin{equation}\label{E807}
\begin{split}
\{ \sigma: &\ \phi_{\sigma,b} \text{ ramifies at } p_1\in
R_{X,\sigma,b} \text{ with index $\ge 3$}\\
&\text{ and } X_b\cap \Bs(\sigma)\ne\emptyset\} \ \text{ and}
\end{split}
\end{equation}
\begin{equation}\label{E984}
\begin{split}
\{ \sigma: &\ \phi_{\sigma,b} \text{ ramifies at } p_1\in
R_{X,\sigma,b} \text{ with index $\ge 4$}\}
\end{split}
\end{equation}
have codimension two in $\G(1, |L|)$ and hence \eqref{E979} follows. The same dimension
count also shows that $Y_b$ meets $R$ transversely for $b\in S$ and $\sigma$
general. Hence if the equality in \eqref{E979} holds, $b\not\in S$.

Already by \eqref{E979}, we see that $R$ has at worst double points as
singularities. We can further show that the singularities $R_{\sing}$ of $R$ are all
nodes.

Let $D = \sum D_i$, where $D_i$'s are irreducible components of $D$, which are sections of
$X/B$ by our assumption on $X$. And let $\Delta_{\sigma, i} = \phi_\sigma(D_i)$ and
$R_{Y,\sigma} = \phi_\sigma(R_{X,\sigma})$. To show that $R$ has normal crossing, it is
suffices to verify the following:
\begin{itemize}
\item $\Delta_{\sigma, i}$ and $\Delta_{\sigma, j}$ meet transversely for all $i\ne j$;
\item $\Delta_{\sigma, i}$ meets $R_{Y,\sigma}$ transversely for all $i$;
\item $R_{Y,\sigma}$ is nodal.
\end{itemize}
It is easy to see that the monodromy action on the intersections
$\Delta_{\sigma,i}\cap \Delta_{\sigma, j}$ when $\sigma$ varies in $\G(1, |L|)$ is
transitive. Therefore, to show that $\Delta_{\sigma, i}$ and $\Delta_{\sigma, j}$ meet
transversely, it suffices to show that they meet transversely at (at least) one point, i.e.,
\begin{itemize}
\item there exists $\sigma\in \G(1, |L|)$, $p_i\in D_i$ and $p_j\in D_j$ such that
  $\Delta_{\sigma, i}$ and $\Delta_{\sigma, j}$
  meet transversely at $\phi_\sigma(p_i) = \phi_\sigma(p_j)$.
\end{itemize}
Similarly, the other two statements translate to
\begin{itemize}
\item there exists $\sigma\in \G(1, |L|)$, $p_i\in D_i$ and $q\in R_{X,\sigma}$ such that
$\Delta_{\sigma, i}$ and $R_{Y,\sigma}$ meet transversely at $\phi_\sigma(p_i) = \phi_\sigma(q)$;
\item there exists $\sigma\in \G(1, |L|)$ and $q\in R_{X,\sigma, b}$ for some $b$ such
  that $\phi_{\sigma, b}$ has ramification index $3$ at $q$ and $R_{Y,\sigma}$ is smooth
  at $\phi_\sigma(q)$;
\item there exists $\sigma\in \G(1, |L|)$ and $q_1\ne q_2\in R_{X,\sigma, b}$ for some $b$ such
  that $R_{Y,\sigma}$ has a node at $\phi_{\sigma}(q_1) = \phi_\sigma(q_2)$.
\end{itemize}
None of these statements are hard to prove. We leave their verification to the readers.
\end{proof}

Suppose that \eqref{E002} fails for $(X, D)$ and $\{C_n\subset X\}$ is the sequence of
irreducible curves satisfying \eqref{E009}. We fix a positive $(1,1)$ form $\omega$
on $X$ that represents $c_1(L)$ and for every finite set of points $S\subset B$, we define
\begin{equation}\label{E800}
f_\omega(S) = \lim_{r\to 0} \varliminf_{n\to\infty} \left(\frac{1}{L\cdot C_n} \sum_{b\in S}
\int_{C_n\cap \pi^{-1}(U(b,r))} \omega\right)
\end{equation}
where $U(b, r)\subset B$ is the disk of radius $r$ centered at $b$. Of course, we need
a metric on $B$ in order to define $U(b, r)$. But it is obvious that the choice of metric
on $B$ is irrelevant here. Although $f_\omega(S)$ depends on
the choice of $\omega$, the vanishing of $f_\omega(S)$ does not depend on $\omega$, i.e.,
if $f_\omega(S) = 0$ for one $\omega$, it vanishes for all choices of $\omega$. And it is
easy to see that
\begin{equation}\label{E801}
\sum_\alpha f_\omega(S_\alpha) \le 1
\end{equation}
for any collection $\{S_\alpha\subset B\}$ of disjoint finite sets $S_\alpha$.

Let us fix a sufficient ample line bundle $L$ on $X$ and let $\phi_\sigma: X\dashrightarrow Y$
be the map given by a pencil $\sigma\subset |L|$ as in the proof of
\propref{PROP003}. This map gives rise to another log pair $(Y, R)$ with $R$ satisfying
the conditions given in the above proposition. Let $Q_\sigma\subset B$ be the finite set
of points $b$ where the equality in \eqref{E979} holds. This gives us a map from $\G(1,
|L|)$ to $B^N/S_N$ sending $\sigma\to Q_\sigma$, where $N = |Q_\sigma|$ and $B^N/S_N$ is
the space of $N$ unordered points on $B$. By \propref{PROP003}, $Q_\sigma\cap Q_{\sigma'}
= \emptyset$ for two general pencils $\sigma$ and $\sigma'$. Combining this with
\eqref{E801}, we see that the set $\{\sigma: f_\omega(Q_\sigma) > r\}$ is contained in a
proper subvariety of $\G(1, |L|)$ for every $r > 0$. Consequently, the set
\begin{equation}\label{E802}
\{\sigma: f_\omega(Q_\sigma) > 0\} = \bigcup_{n=1}^\infty \{\sigma: f_\omega(Q_\sigma) > \frac{1}{n}\}
\end{equation}
is contained in a union of countably many proper subvarieties of $\G(1, |L|)$. In other
words, $f_\omega(Q_\sigma) = 0$ for a very general pencil $\sigma$. For a very general
pencil $\sigma$, $C_n$ are disjoint from the base locus of $\sigma$. 
Hence $L\cdot C_n = Y_p \cdot \Gamma_n$, where $\Gamma_n = \phi_\sigma(C_n)$ and $Y_p$ is
a fiber of $Y/\P^1$. In addition, we have proved that $X_b\cap \Bs(\sigma) = \emptyset$
for $b\in Q_\sigma$. Hence $f_\omega(Q_\sigma) = 0$ implies
\begin{equation}\label{E803}
\lim_{r\to 0} \varliminf_{n\to\infty} \left(\frac{1}{Y_p\cdot \Gamma_n} \sum_{b\in Q_\sigma}
\int_{\Gamma_n\cap \pi_Y^{-1}(U(b,r))} \eta\right) = 0
\end{equation}
where $\eta$ is the pullback of a positive $(1,1)$ form on $\P^1$ representing
 $c_1(\CO_{\P^1}(1))$ and $\pi_Y$ is the projection $Y\to B$.
By taking a
subsequence of $\{\Gamma_n\}$, we may as well replace $\varliminf$ by $\lim$.

We may further apply a suitable base change to $Y/B$ to make $R_Y$ into a union of
sections of $Y/B$ while preserving the other properties of $(Y, R)$. 
So we finally reduce the conjecture from $(X, D, \varepsilon)$ to $(Y, R,
\delta)$. Replacing $(X, D, \varepsilon)$ by $(Y, R, \delta)$, we may assume the following holds.
\begin{itemize}
\item[{\bf A1.}] $D\subset X = \P^1\times B$ is a normal-crossing divisor which is a union of sections of
$X/B$.
\item[{\bf A2.}] $\omega_{X/B} + D$ is relatively ample over $B$.
\item[{\bf A3.}] For every fiber $X_b$ of $X/B$,
\begin{equation}\label{E919}
i_X(X_b, D) \ge X_b\cdot D - 1
\end{equation}
\item[{\bf A4.}] There is a sequence of reduced and irreducible curves $\{C_n\}$ on $X$
  that dominate $B$ and satisfy \eqref{E009}.
\item[{\bf A5.}] Let $Q\subset B$ be the set of points $b$ where the equality in
  \eqref{E919} holds, i.e., $Q = \pi(D_{\sing})$, where $D_{\sing}$ is the singular locus
  $D_{\sing}$ of $D$; then
\begin{equation}\label{E808}
\lim_{r\to 0} \lim_{n\to\infty} \left(\frac{1}{X_p\cdot C_n} \sum_{b\in Q}
\int_{C_n\cap \pi^{-1}(U(b,r))} w\right) = 0
\end{equation}
where $X_p$ is the fiber of $X$ over a point $p\in \P^1$ and $w$ is the pullback of
a positive $(1,1)$ form on $\P^1$ representing $c_1(\CO_{\P^1}(1))$.
\end{itemize}

\section{Proof of \conjref{CONJ002}}

\subsection{Lifts to the first jet space}

Now we can work exclusively on $(X,D)$ with $(X, D)$ satisfies the hypotheses A1-A5 in the
last section.
As in Vojta's proof of $2+\varepsilon$ theorem, we start by lifting every curve $C_n\subset
X$ to its 1st jet space.

Let $\Omega_X(\log D)$ be the sheaf of
logarithmic differentials with poles along $D$ and $T_X(-\log D) = \Omega_X(\log D)^\vee$
be its dual.
Let $Y = \P T_X(-\log D)$ be the
projectivization of $T_X(-\log D)$ with tautological line bundle $L$.
Here we follow the traditional convention that
\begin{equation}\label{E920}
\P E = \Proj (\oplus \Sym^\bullet E^\vee)
\text{ and }
H^0(L)\isom H^0(E^\vee).
\end{equation}

We have the basic exact sequence
\begin{equation}\label{E014}
0\xrightarrow{} \pi^* \Omega_B \xrightarrow{} \Omega_X(\log D) \xrightarrow{}
\Omega_{X/B}(D)
\end{equation}
Note that this sequence is not right exact; $\Omega_X(\log D)\to \Omega_{X/B}(D)$
fails to be surjective along $D_{\sing}$.

Every nonconstant map $\nu: C\to X$ from a smooth curve $C$ to $X$
can be naturally lifted to a map $\nu_Y: C\to Y$ via the map
\begin{equation}\label{E015}
\P T_C(-\log \nu^* D) \to \P T_X(-\log D)
\end{equation}
Suppose that $\nu$ maps $C$ birational onto its image.
Then the natural map $\nu^* \Omega_X (\log D) \to \Omega_C(\log \nu^* D)$ induces a map
\begin{equation}\label{E017}
\nu_Y^* L \rightarrow \Omega_C(\log \nu^* D)
\end{equation}
Obviously, this map is nonzero and $\nu_Y^* L$ is locally free; consequently, it is an injection.
Therefore, we have
\begin{equation}\label{E010}
\deg \nu_Y^* L \le \deg \Omega_C(\log \nu^* D) = 2g(C) - 2 + i_X(\nu(C), D)
\end{equation}
Hence \eqref{E002} holds if
\begin{equation}\label{E012}
\deg \nu_Y^* (\pi_X^*(\omega_{X/B} + D) - (1+\varepsilon) L) \le N_\varepsilon \deg (\nu^* X_b)
\end{equation}
where $\pi_X$ is the projection $Y\to X$. Another way to put this is that
\begin{equation}\label{E013}
G \cdot (\nu_Y)_* C \ge 0
\end{equation}
for a sufficiently ample divisor $M\subset B$ and every $\nu: C\to X$ with $\nu(C)$ dominating $B$,
where
\begin{equation}\label{E200}
G = (1+\varepsilon) L + \pi_B^* M - \pi_X^*(\omega_{X/B} + D)
\end{equation}
where $\pi_B = \pi\circ \pi_X$ is the projection $Y\to B$. Or in the context of our
hypothesis A4, we want to show that
\begin{equation}\label{E809}
-G\cdot \Gamma_n = O(\deg C_n)
\end{equation}
and thus arrive at a contradiction,
where $\Gamma_n\subset Y$ is the lift of $C_n\subset X$ via its normalization and $\deg
C_n = C_n \cdot X_b$.
Here by $O(\deg C_n)$, we mean a quantity $\le K \deg C_n$ for some constant $K$ and all $n$.

Obviously, \eqref{E013} holds if the divisor $G$ is numerically effective (NEF).
Unfortunately, we cannot expect this to be true in general.

The map $\Omega_X(\log D)\to \Omega_{X/B}(D)$ in \eqref{E014} induces a rational map
\begin{equation}\label{E018}
\P T_{X/B}(-D) \dashrightarrow Y.
\end{equation}
Let $\Delta\subset Y$ be the closure of the image of this map.
As we are going to see, $\Delta$ will play a central role in our
argument. Another way to characterize $\Delta$ is the following.

\begin{lem}\label{LEM902}
We have
\begin{equation}\label{E213}
\Delta = \overline{\bigcup_{b\in B} \mu_Y(X_b)}
\end{equation}
and a curve $\nu: C\hookrightarrow X$ is tangent to a fiber $X_b$ if and only if
$\nu_Y(C)$ intersects $\Delta$, where $\mu_Y: X_b \to Y$ is the lifting of the embedding
$X_b\hookrightarrow X$.
\end{lem}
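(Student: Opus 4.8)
The plan is to realize $\Delta$ as the closure of the image of an explicit rational section of $\pi_X\colon Y\to X$, and then read off both assertions from that description. Away from $D_{\sing}$ the sequence \eqref{E014} is a short exact sequence of vector bundles $0\to\pi^*\Omega_B\to\Omega_X(\log D)\to\Omega_{X/B}(D)\to 0$, and dualizing gives $0\to T_{X/B}(-D)\to T_X(-\log D)\to\pi^*T_B\to 0$ on $X\setminus D_{\sing}$, in which $T_{X/B}(-D)$ is a sub-line-bundle of $T_X(-\log D)$. Since $X=\P^1\times B$ by A1, $\pi$ is smooth and $T_{X/B}$ is a line bundle, so $\P T_{X/B}(-D)$ is canonically $X$, and the inclusion above is exactly the rational map \eqref{E018}, realized as the section $s\colon X\setminus D_{\sing}\to Y$ sending $x$ to the line $[\,T_{X/B}(-D)_x\subset T_X(-\log D)_x\,]$. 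Hence $\Delta=\overline{s(X\setminus D_{\sing})}$; in particular $\Delta\cap Y_x$ is the single point $s(x)$ for every $x\notin D_{\sing}$, and by exactness $T_{X/B}(-D)_x=\ker\!\big(T_X(-\log D)_x\to T_{B,\pi(x)}\big)$ there.

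For the identity \eqref{E213}: by the construction \eqref{E015}, $\mu_Y\colon X_b\to Y$ sends $x$ to the line that is the image of $T_{X_b}(-\log(X_b\cap D))_x\to T_X(-\log D)_x$. Off $D$ this image is the vertical tangent line $T_{X_b,x}=T_{X/B}(-D)_x$, and over $D\setminus D_{\sing}$ the two log-tangent descriptions still agree after a one-parameter local computation (using A1: $D$ is a union of sections meeting each fiber transversally away from $D_{\sing}$). Thus $\mu_Y(X_b)$ and $s(X_b)$ coincide over $X\setminus D_{\sing}$; letting $b$ vary, $\bigcup_b\mu_Y(X_b)$ and $s(X)$ coincide over $X\setminus D_{\sing}$, and since $D_{\sing}$ is finite, passing to closures yields $\overline{\bigcup_b\mu_Y(X_b)}=\overline{s(X)}=\Delta$.

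For the tangency statement, let $\nu\colon C\to X$ dominate $B$, put $x_0=\nu(c_0)$, $b_0=\pi(x_0)$, and assume first $x_0\notin D_{\sing}$. The point $\nu_Y(c_0)\in Y_{x_0}$ is the image line of $T_C(-\log\nu^*D)_{c_0}\to T_X(-\log D)_{x_0}$, and since $\Delta\cap Y_{x_0}=\{s(x_0)\}$ we get the chain of equivalences: $\nu_Y(c_0)\in\Delta$ iff this line equals $s(x_0)=\ker(T_X(-\log D)_{x_0}\to T_{B,b_0})$, iff the composite $T_C(-\log\nu^*D)_{c_0}\to T_X(-\log D)_{x_0}\to T_{B,b_0}$ vanishes, iff (up to the natural twist at points of $\nu^*D$) the derivative $d(\pi\circ\nu)_{c_0}$ vanishes, iff $C$ is tangent to $X_{b_0}$ at $x_0$; ranging over all $c_0$ gives the equivalence for curves meeting $D$ only away from $D_{\sing}$. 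The genuinely delicate point — which I expect to be the main obstacle — is the behaviour over $D_{\sing}$: a local computation in coordinates where $D=\{uv=0\}$ and the fibers are $\{u-v=\mathrm{const}\}$ shows that $s$ does not extend across $D_{\sing}$ and that $\Delta$ contains the entire fiber $Y_{x_0}$ when $x_0\in D_{\sing}$. Consequently the tangency criterion must be read with the understanding that a curve through a point of $D_{\sing}$ counts as tangent to the fiber there (equivalently, one states the equivalence for curves disjoint from $D_{\sing}$, which is the relevant case in the sequel). Granting this boundary analysis, Steps 1–3 are formal once the right-exactness of \eqref{E014} off $D_{\sing}$ and the compatibility of the two log-tangent constructions in the second step are in hand.
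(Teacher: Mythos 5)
The paper offers no proof ("This is more or less trivial"), so there is nothing to compare against line by line; your write-up supplies the missing details and the main geometric picture is correct. Identifying $\Delta$ with the closure of the rational section $s: X\setminus D_{\sing}\to Y$, $x\mapsto[T_{X/B}(-D)_x]$, coming from the dual of \eqref{E014}, and then matching $s$ against the lifts $\mu_Y$ of the fibers, is exactly the right move. Your local model $D=\{uv=0\}$ at a point of $D_{\sing}$ correctly shows both that $s$ does not extend across $D_{\sing}$ and that $\Delta$ contains the whole fiber of $Y\to X$ there, consistent with the paper's later observation that $\Delta\to X$ is the blowup along $D_{\sing}$.

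The one place I would sharpen your write-up is the "iff" in the tangency criterion. You flag $D_{\sing}$ as the delicate locus, but the more consequential issue already arises at $D\setminus D_{\sing}$ and is hidden in your phrase "up to the natural twist at points of $\nu^*D$." Concretely: in local coordinates $(u,t)$ with $D=\{u=0\}$, the composite $T_C(-\log\nu^*D)_{c_0}\to T_X(-\log D)_{x_0}\to T_{B,b_0}$ sends the log generator $\tau\partial_\tau\mapsto\tau t'(\tau)\partial_t$, which vanishes at $\tau=0$ whenever $\nu(c_0)\in D$, regardless of whether $t'(0)=0$. Equivalently, your own computation of $\mu_Y$ already shows $\nu_Y(c_0)=s(x_0)\in\Delta$ at every point where $C$ meets $D$. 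So a curve meeting $D$ transversally to the fiber at a smooth point of $D$ has $\nu_Y(C)\cap\Delta\neq\emptyset$ without being tangent to any fiber in the ordinary sense; the "iff" is correct only if "tangent to a fiber" is read in the log sense (i.e.\ points of $C\cap D$ count as tangencies). This does not affect the one direction that the paper actually uses later, in \lemref{LEM903}, namely that $(p,v)\notin\Delta$ forces the image of $v$ under $T_X(-\log D)\to\pi^*T_B$ to be nonzero, and your proof of that direction is fine. But you should either restrict the "iff" to curves disjoint from $D$, or state explicitly that intersections with $D$ count as log-tangencies, rather than leaving it to an unexplained "twist."
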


\begin{proof}
This is more or less trivial.
\end{proof}

\subsection{Some Numerical Results}

Here we prove some numerical results on $\Delta, L$ and $G$, which we are going to need later. First of
all, it is obvious that $\pi_X$ maps $\Delta$ birationally onto $X$; indeed, by a
local analysis, we see that $\Delta$ is the blowup of $X$ along $D_{\sing}$, i.e.,
the places where $\Omega_X(\log D)\to \Omega_{X/B}(D)$ failes to be surjective.
In the lift of $\nu: C\to X$ to $\nu_Y: C\to Y$, if $\nu$ is a smooth embedding, we have
\begin{equation}\label{E911}
(\nu_Y)^* L = \omega_C + \nu^{-1}(D)
\end{equation}
where $\nu^{-1}(D) = \supp(\nu^* D)$ is the reduced pre-image of $D$.
Namely, \eqref{E017} is an isomorphism. Therefore, for every fiber
$X_b$,
\begin{equation}\label{E912}
L\cdot \wt{X}_b = 2g(X_b) - 2 + i_X(X_b, D)
\end{equation}
where $\wt{X}_b\subset \Delta$ is the proper transform of $X_b$ under $\Delta\to X$.
Applying this to all the fibers $X_b$ with $X_b\cap D_{\sing}\ne \emptyset$, we see that
\begin{equation}\label{E913}
L \big|_\Delta = \pi_X^*(\omega_{X/B} + D + \pi^* M) - E
\end{equation}
for some divisor $M$ on $B$, where
\begin{equation}\label{E215}
E = \sum_{q\in D_{\sing}} E_q
\end{equation}
is the exceptional divisor of $\Delta\to X$. To determine $M$, we restrict everything to a section
$X_p = \rho^{-1}(p)$ of $X/B$, where $\rho$ is the projection $X\to \P^1$.
For $p$ general, the restriction of \eqref{E014} to $X_p\isom B$ becomes
\begin{equation}\label{E917}
0\xrightarrow{} \Omega_{X_p} \xrightarrow{} \Omega_X(\log D)\big|_{X_p} \xrightarrow{}
\CO_{X_p}(D) \xrightarrow{} 0
\end{equation}
Let $\Delta_p$ be the proper transform of $X_p$ under $\Delta\to X$. Then we see from
\eqref{E917} that the restriction of $L$ to $\Delta_p\isom B$ is 
\begin{equation}\label{E914}
L\big|_{\Delta_p} = \pi_X^* D
\end{equation}
Comparing \eqref{E913} and \eqref{E914}, we conclude that $M$ is trivial and hence
\begin{equation}\label{E915}
L \big|_\Delta = \pi_X^*(\omega_{X/B} + D) - E
\end{equation}
As a consequence,
\begin{equation}\label{E011}
\begin{split}
G\Big|_\Delta &= \big((1+\varepsilon) L + \pi_B^* M - \pi_X^*(\omega_{X/B} + D)\big)
  \Big|_\Delta\\
&= \varepsilon \pi_X^*(\omega_{X/B} + D) + \pi_B^* M - (1+\varepsilon) E
\end{split}
\end{equation}
Next, we claim that
\begin{equation}\label{E918}
\Delta = L - \pi_B^* \omega_B
\end{equation}
This is obviously true if \eqref{E014} is an exact sequence of locally free sheaves, i.e.,
$D_{\sing} = \emptyset$. To see this is true in general, we restrict everything to a
smooth curve $C\subset X$ with $C\cap D_{\sing} = \emptyset$. By the above reason, \eqref{E918}
holds when restricted to $\pi_X^{-1}(C)$. Such curves $C$ obviously generate
$\Pic(X)$ and hence \eqref{E918} holds over $Y$.

%
By restricting \eqref{E014} to each fiber $X_b$ of $X/B$, we see that $L$ is relatively
NEF over $B$. Moreover, the following holds.

\begin{lem}\label{LEM904}
For all $m \ge k\in \BZ$ and $m > 0$, $mL - k\Delta$ is relatively BPF over $B$ and
\begin{equation}\label{E950}
H^1(m (L + \pi_B^* M) - k\Delta) = 0
\end{equation}
for a sufficiently ample divisor $M\subset B$.
\end{lem}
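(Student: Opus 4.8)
The plan is to trade the absolute statement on $Y$ for a fiberwise statement over $B$. By \eqref{E918} we may rewrite $mL-k\Delta=(m-k)L+k\,\pi_B^*\omega_B$ and $m(L+\pi_B^*M)-k\Delta=(m-k)L+\pi_B^*(mM+k\omega_B)$. Since tensoring by a line bundle pulled back from $B$ is harmless for relative base-point-freeness over $B$ (projection formula), and since $m\ge k$, it suffices to prove that $jL$ is relatively BPF over $B$ for every integer $j\ge 0$ and that $H^1\big(Y,\,jL+\pi_B^*N\big)=0$ for $N$ sufficiently ample on $B$; the lemma follows by choosing $M$ ample enough (for each given $m,k$) that $mM+k\omega_B$ is as ample as $N$ needs to be, which is possible because $m\ge 1$. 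For $j=0$ the reduced statements are just $\pi_{B*}\CO_Y=\CO_B$ and $R^1\pi_{B*}\CO_Y=0$, immediate since $\pi_X$ and $X=\P^1\times B\to B$ are $\P^1$-bundles.

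The core is the analysis over a single fiber. Write $Y_b=\pi_B^{-1}(b)=\P\big(T_X(-\log D)|_{X_b}\big)$, a ruled surface over $X_b\cong\P^1$ on which $L$ restricts to the tautological bundle with $\pi_{X*}(L|_{Y_b})=\Omega_X(\log D)|_{X_b}$ by \eqref{E920}. The key claim, treated below, is that $\Omega_X(\log D)|_{X_b}$ is globally generated — equivalently, writing $\Omega_X(\log D)|_{X_b}\cong\CO_{\P^1}(a)\oplus\CO_{\P^1}(b)$, that $a,b\ge 0$. Granting this, $L|_{Y_b}$ is BPF, and since $\pi_{X*}(jL|_{Y_b})=\Sym^j\big(\CO_{\P^1}(a)\oplus\CO_{\P^1}(b)\big)$ is a sum of nonnegative line bundles while $R^1\pi_{X*}(jL|_{Y_b})=0$, we get $H^1(Y_b,jL|_{Y_b})=0$ for all $j\ge 0$. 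Cohomology and base change then shows $\pi_{B*}(jL)$ is locally free, commutes with restriction to fibers, and has $R^1\pi_{B*}(jL)=0$, its fiber at $b$ being $H^0(Y_b,jL|_{Y_b})$; fiberwise base-point-freeness of $jL|_{Y_b}$ therefore upgrades by Nakayama to relative base-point-freeness of $jL$ over $B$. Finally $R^1\pi_{B*}(jL)=0$ and the Leray spectral sequence identify $H^1(Y,jL+\pi_B^*N)$ with $H^1\big(B,\,\pi_{B*}(jL)\otimes\CO_B(N)\big)$, which vanishes for $N$ sufficiently ample by Serre vanishing on the curve $B$.

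The main obstacle is the global generation of $\Omega_X(\log D)|_{X_b}$, and the difficulty sits on the fibers meeting $D_{\sing}$, where \eqref{E014} fails to be right exact. My plan for this step: the image of $\Omega_X(\log D)\to\Omega_{X/B}(D)$ is $\Omega_{X/B}(D)\otimes\I_{D_{\sing}}$, so $0\to\pi^*\Omega_B\to\Omega_X(\log D)\to\Omega_{X/B}(D)\otimes\I_{D_{\sing}}\to 0$ is exact, and since $\CO_{X_b}$ has projective dimension $1$ in $\CO_X$ the relevant $\mathrm{Tor}_1$ vanishes, so this sequence stays exact after restriction to $X_b$. Stripping the torsion from the cokernel then yields an exact sequence $0\to\CO_{X_b}(\ell)\to\Omega_X(\log D)|_{X_b}\to\CO_{X_b}(d-2-\ell)\to 0$ with $d=X_b\cdot D$ and $\ell=\#(D_{\sing}\cap X_b)$. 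If $\min(a,b)<0$, inspecting $\Hom\big(\Omega_X(\log D)|_{X_b},\CO_{X_b}(d-2-\ell)\big)$ (the map to $\CO_{X_b}(b)\le\CO_{X_b}(d-2)$ is forced to kill the larger summand) forces $\ell\ge d-1$; but each component $D_i$ of $D$ is a section, so $D_i\cdot X_b=1$ and $d$ equals the number of components, and $D$ is normal crossing, so the $\ell$ nodes of $D$ on $X_b$ involve $2\ell$ distinct components and $d\ge 2\ell$, while hypothesis A2 makes $(\omega_{X/B}+D)|_{X_b}=\CO_{\P^1}(d-2)$ ample, so $d\ge 3$ — and these contradict $\ell\ge d-1$. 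Hence $a,b\ge 0$. (One loose end: making a single $M$ serve all $m\ge k$ simultaneously, which the statement does not literally demand, would require an additional bound, linear in $j$, on the Harder--Narasimhan slopes of $\pi_{B*}\big(\Sym^{j}\Omega_X(\log D)\big)$; I would only pursue this if a later application needed the uniform version.)
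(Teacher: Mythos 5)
Your proof is correct, and at the macroscopic level it follows the same road map as the paper: use \eqref{E918} to absorb $\Delta$ into a twist by $L$ and a pullback from $B$, reduce to the fiberwise analysis of $\Omega_X(\log D)|_{X_b}\cong\CO_{\P^1}(a)\oplus\CO_{\P^1}(b)$, establish $a,b\ge 0$, and then get relative base-point-freeness and the $H^1$-vanishing from pushforward to $B$. But the execution differs at the two places that actually carry weight, and in both cases your version is tighter.

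First, the nonnegativity $\min(a,b)\ge 0$. The paper disposes of this with one sentence, ``By \eqref{E014}, we must have $\beta,\gamma\ge 0$.'' That is fine for fibers with $X_b\cap D_{\sing}=\emptyset$, where restricting \eqref{E014} yields a short exact sequence of bundles $0\to\CO_{X_b}\to E\to\CO_{\P^1}(\alpha)\to 0$ with $\alpha>0$, and the sub-line-bundle forces both summands nonnegative. For the fibers meeting $D_{\sing}$ the restricted sequence is not right-exact, and the one-liner does not obviously apply; the splitting type can in principle jump on these special fibers. Your argument --- identify the image as $\Omega_{X/B}(D)\otimes\I_{D_{\sing}}$, use $\mathrm{Tor}_1$-vanishing against $\CO_{X_b}$, strip torsion to get $0\to\CO_{X_b}(\ell)\to E\to\CO_{X_b}(d-2-\ell)\to 0$, and then use the bound $d\ge 2\ell$ (each node on $X_b$ uses up two distinct sections) together with $d\ge 3$ from A2 --- genuinely closes this gap. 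As a small remark, hypothesis A3 (inequality \eqref{E919}) gives the even more direct bound $\ell\le 1$, since $i_X(X_b,D)=d-\ell$, and this collides immediately with $\ell\ge d-1\ge 2$; you did not need to bring in the $d\ge 2\ell$ count, though it is also correct.

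Second, the $H^1$-vanishing. The paper passes through \eqref{E956}--\eqref{E957}, effectively replacing $(\pi_B)_*L^{\,m-k}$ by $\Sym^{m-k}(\pi_B)_*L$; as written this identification is off (the two bundles have different ranks in general --- for $Y_b$ a $\P^1$-bundle over $\P^1$, $\Sym^n H^0(Y_b,L)\to H^0(Y_b,L^n)$ is surjective but not injective). Your route via $H^1(Y_b,jL|_{Y_b})=0$, cohomology-and-base-change, and Leray lands directly on $H^1\big(B,(\pi_B)_*(jL)\otimes\CO_B(N)\big)$ and then appeals to Serre vanishing on the curve, which is clean. Your flag about a uniform choice of $M$ across all $m\ge k$ is a fair observation; the lemma as stated, and its uses in Lemma \ref{LEM003} and Proposition \ref{PROP002}, only need $M$ depending on the fixed $m$ and $\varepsilon$, so nothing is lost.
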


\begin{proof}
Since $c_1(\Omega_X(\log D)) = \omega_X + D$, the restriction of $\Omega_X(\log D)$ to
a fiber $X_b\isom \P^1$ is
\begin{equation}\label{E951}
\Omega_X(\log D)\big|_{X_b} = \CO_{\P^1}(\beta) \oplus \CO_{\P^1}(\gamma)
\end{equation}
where
\begin{equation}\label{E952}
\beta + \gamma = \alpha = (\omega_{X/B} + D)\cdot X_b.
\end{equation}
By \eqref{E014}, we must have $\beta,\gamma\ge 0$. Therefore,
\begin{equation}\label{E953}
Y_b \isom \P\left(\CO_{\P^1}(-\beta) \oplus \CO_{\P^1}(-\gamma)\right)
\end{equation}
and together with \eqref{E918}, we see that $mL - k\Delta$ is relatively NEF over $B$ for
$m\ge k$. Also we see from the above argument that
\begin{equation}\label{E954}
H^1(Y_b, mL - k\Delta) = 0 \Leftrightarrow R^1 (\pi_B)_* \CO(mL - k\Delta) = 0
\end{equation}
This implies
\begin{equation}\label{E955}
\begin{split}
H^1(m (L + \pi_B^* M) - k\Delta) &= H^1((\pi_B)_* \CO(m (L + \pi_B^* M) - k\Delta))\\
&= H^1((\pi_B)_* L^{m-k} \tensor \CO_B(k \omega_B + m M))
\end{split}
\end{equation}
By \eqref{E953},
\begin{equation}\label{E957}
\Sym^n H^0(Y_b, L) = H^0(Y_b, L^n).
\end{equation}
Therefore,
\begin{equation}\label{E956}
H^1(m (L + \pi_B^* M) - k\Delta) 
= H^1(\Sym^{m-k} (\pi_B)_* L \tensor \CO_B(k \omega_B + m M))
\end{equation}
It suffices to choose $M$ such that all of $M$, $\omega_B + M$ and $(\pi_B)_* L\tensor \CO_B(M)$
are ample and \eqref{E950} follows.
\end{proof}

\begin{rem}\label{REM902}
It is possible to give a more precise version of \eqref{E950} on how ample $M$ should be
in terms of $\omega_B$ and $D$; however, we have no need for it here. Also in the above
proof, we observe that $L$ fails to be ample on $Y_b$ if and only if \eqref{E951} splits
as
\begin{equation}\label{E958}
\Omega_X(\log D)\big|_{X_b} = \CO_{\P^1} \oplus \CO_{\P^1}(\alpha)
\end{equation}
If \eqref{E958} holds on a general fiber $X_b$, it holds everywhere and
this only happens when $D$ consists of $\alpha+2$ disjoint sections of $X/B$, in which
case the conjecture is trivial. Hence we may assume that $L$ is ample on a general fiber
of $Y/B$. This implies that $L+ \pi_B^* M$ is big for a sufficiently ample divisor $M\subset
B$, in addition to being NEF as already proved.
The same, of course, holds for $mL - k\Delta +  \pi_B^* M$ when $m > k$.
\end{rem}

\subsection{Bergman Metric}

Given a line bundle $L$ on a compact complex manifold $X$ and sections $s_0, s_1, ..., s_n\in
|L|$ of $L$, we recall the {\it Bergman metric\/} associated to $\{s_k\}$ is the pullback of
the Fubini-Study metric under the map $X\dashrightarrow \P^n$ given by $\{s_k\}$, i.e., the pseudo-metric
with associated $(1,1)$ form
\begin{equation}\label{E024}
w = \frac{\sqrt{-1}}{2\pi} \partial \ol\partial \log \left(\sum_{k=0}^n |s_k|^2\right)
\end{equation}
Alternatively, Fubini-Study metric can be regarded as a metric of the line bundle
$\CO_{\P^n}(1)$ and the Bergman metric is correspondingly a pseudo-metric of $L$ with $w$
the curvature form.
In general, $w$ is only a closed real current of type $(1,1)$ with the following
properties:
\begin{itemize}
\item it is $C^\infty$ outside of the base locus $\Bs\{s_k\}$
of $\{s_k\}$;
\item it represents $c_1(L)$ if $\{s_k\}$ is BPF;
\item we always have
\begin{equation}\label{E025}
\nu^*w\ \text{is}\ C^\infty, \nu^*w \ge 0 \text{ and } \deg(\nu^* L) \ge \int_{C} \nu^* w
\end{equation}
for any morphism $\nu: C\to X$ from a smooth and irreducible projective curve $C$ to $X$ with 
$\nu(C) \not\subset \Bs\{s_k\}$.
\end{itemize}

The indeterminancy of
the rational map $\phi: X\dashrightarrow \P^n$ given by $\{s_k\}$ can be resolved by a
sequence of blowups along smooth centers over $\Bs\{s_k\}$. That is, there exists a birational map $\pi:
Y\to X$ such that $f = \phi\circ \pi$ is regular. Let $\wt{s}_k$ be the proper transform
of $s_k$ under $\pi$. Then $\{\wt{s}_k\}$ span a BPF linear system of $\wt{L} =
f^*\CO_{\P^n}(1)$. Let $\wt{w}$ be the Bergman metric associated to
$\{\wt{s}_k\}$. Then $\wt{w} = \pi^* w$ outside of exceptional locus of $\pi$. Indeed, the
current $w$ is defined in the way of
\begin{equation}\label{E959}
\langle w, \gamma\rangle = \int_Y \wt{w}\wedge \pi^*\gamma
\end{equation}
Then \eqref{E025} follows easily.
 
\subsection{Construction of the 1st chern classes}

Let
\begin{equation}\label{E259}
\pi_X^* (\omega_{X/B} + D) = \alpha Y_p + \pi_B^* N
\end{equation}
for some divisor $N\subset B$, where $Y_p$ is a fiber of $Y/\P^1$. 
We replace $M$ by $M + N$ and write $G$ in the form
\begin{equation}\label{E260}
G = (1+\varepsilon) L + \pi_B^* M - \alpha Y_p
\end{equation}
Our purpose remains, of course, to show \eqref{E809}.

We write the LHS of \eqref{E809} in the integral form:
\begin{equation}\label{E021}
G\cdot \Gamma_n = \int_{\Gamma_n} c_1(G) = \int_{\Gamma_n\backslash U} c_1(G) +
\int_{\Gamma_n\cap U} c_1(G)
\end{equation}
where $U$ is an (analytic) open neighborhood of $\Delta$. Here we have to work with the
forms that represent the first chern classes instead of cohomology classes themselves,
i.e., $c_1(G)$ refers to a $(1,1)$ form representing the first chern class of $G$;
otherwise, the integrals in \eqref{E021} do not make sense. The construction of
appropriate $c_1(G)$ is one of the main parts of our proof. Basically, by a proper choice
of $c_1(G)$ with
\begin{equation}\label{E023}
c_1(G) = c_1((1+\varepsilon) L + \pi_B^* M) - c_1(\alpha Y_p)
\end{equation}
we will show that both
\begin{equation}\label{E022}
-\int_{\Gamma_n\backslash U} c_1(G)\ \text{and}\
-\int_{\Gamma_n\cap U} c_1(G)
\end{equation}
have order of $O(\deg C_n)$. The forms representing $c_1((1+\varepsilon) L + \pi_B^* M)$
and $c_1(\alpha Y_p)$ are constructed via Bergman metric mentioned above.

Let us first fix a sufficiently large integer $m$ with $m\varepsilon \in \BZ$; obviously, we
may assume $\varepsilon\in \BQ$. Since $H^0(m\alpha Y_p) = H^0(\CO_{\P^1}(m\alpha))$, a
general pencil of $m\alpha Y_p$ is BPF.
To construct a form $w$ representing $c_1(m\alpha Y_p)$, it is enough to choose a BPF
pencil of $m\alpha Y_p$ with basis $\{s_0, s_1\}$ and let
\begin{equation}\label{E026}
w = \frac{\sqrt{-1}}{2\pi} \partial \ol\partial \log \left(|s_0|^2 + |s_1|^2\right)
\end{equation}
be the Bergman metric associated to $\{s_0, s_1\}$. Obviously, $w$ is $C^\infty$ and
represents $c_1(m\alpha Y_p)$. Next we will construct a Bergman metric on the line bundle
$\CO_Y(m (1+\varepsilon) L + m \pi_B^* M)$.

Let $S_i = \{s_i = 0\}$ for $i=0,1$ and let $\{\sigma_{0j}: j\in J\}$ be a basis of the linear system of
$m (1+\varepsilon) L + m \pi_B^* M$ consisting of sections $\sigma$ with
\begin{equation}\label{E264}
\sigma\Big|_{S_0} \in H^0(S_0, m (1+\varepsilon) L + m \pi_B^* M - 2\Delta)
\end{equation}
Or equivalently, $\sigma_{0j}$ are the sections tangent to $S_0$ along $S_0\cap \Delta$.

\begin{lem}\label{LEM003}
For each $j$, there exists a section $\sigma_{1j}$ of $m (1+\varepsilon) L + m \pi_B^* M$
such that $s_0 \sigma_{1j} - s_1 \sigma_{0j}$ vanishes to the order of $2$ along $\Delta$,
i.e.,
\begin{equation}\label{E027}
s_0 \sigma_{1j} - s_1 \sigma_{0j} \in H^0(m (1+\varepsilon) L + m \pi_B^* M + m\alpha Y_p - 2\Delta)
\end{equation}
In addition, $\{\sigma_{1j}\}$ can be chosen to be a basis of the linear system consisting
of sections $\sigma$ with
\begin{equation}\label{E265}
\sigma\Big|_{S_1} \in H^0(S_1, m (1+\varepsilon) L + m \pi_B^* M - 2\Delta)
\end{equation}
\end{lem}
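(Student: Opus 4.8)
The plan is to construct the $\sigma_{1j}$ by a short cohomological argument on $Y$, resting on two facts. Write $F = m(1+\varepsilon)L + m\pi_B^*M$, so that $s_0,s_1\in H^0(m\alpha Y_p)$ and $\sigma_{0j}\in H^0(F)$. First, since the pencil $\{s_0,s_1\}$ is base-point free, $S_0\cap S_1=\emptyset$; in particular $\Delta$ is contained in neither $S_0$ nor $S_1$, so $s_0$ and $s_1$ have order $0$ along $\Delta$, and $s_1$ is invertible near $S_0\cap\Delta$ while $s_0$ is invertible near $S_1\cap\Delta$. Second, $H^1(F-2\Delta)=0$: using $\Delta = L-\pi_B^*\omega_B$ from \eqref{E918} one has $F-2\Delta = (m(1+\varepsilon)-2)L + \pi_B^*(2\omega_B+mM)$, and the computation in \lemref{LEM904} (which uses only $m(1+\varepsilon)-2\ge 0$ together with \eqref{E957}) identifies $H^1(F-2\Delta)$ with $H^1\!\big(B,\Sym^{m(1+\varepsilon)-2}(\pi_B)_*L\otimes\CO_B(2\omega_B+mM)\big)$, which vanishes once $M$ is sufficiently ample and $m$ is large (recall $m\varepsilon\in\BZ$).

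Now introduce the subsheaf
\[
\mathcal{G}\;=\;s_0\cdot\CO(F)\;+\;\CO\big(F+m\alpha Y_p-2\Delta\big)\;\subseteq\;\CO\big(F+m\alpha Y_p\big).
\]
Away from $S_0\cap\Delta$ one has $\mathcal{G}=\CO(F+m\alpha Y_p)$ (because $s_0$ is invertible off $S_0$ and $\CO(-2\Delta)$ is trivial off $\Delta$), while near a point of $S_0\cap\Delta$, in local coordinates with $\Delta=\{u=0\}$ and $S_0=\{s_0=0\}$, one has $\mathcal{G}=(s_0,u^2)\cdot\CO(F+m\alpha Y_p)$. Condition \eqref{E264} says precisely that $\sigma_{0j}|_{S_0}$ is divisible by $(u|_{S_0})^2$, equivalently $\sigma_{0j}\in(s_0,u^2)\cdot\CO(F)$ near $S_0\cap\Delta$; multiplying by $s_1$, invertible there, gives $s_1\sigma_{0j}\in\mathcal{G}$ near $S_0\cap\Delta$, and trivially elsewhere, so $s_1\sigma_{0j}\in H^0(\mathcal{G})$. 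On the other hand, the map $(\sigma,\rho)\mapsto s_0\sigma+\rho$ fits into a short exact sequence $0\to\CO(F-2\Delta)\to\CO(F)\oplus\CO(F+m\alpha Y_p-2\Delta)\to\mathcal{G}\to 0$, with kernel $\sigma\mapsto(\sigma,-s_0\sigma)$ (here $\mathrm{ord}_\Delta s_0=0$ is used). Since $H^1(F-2\Delta)=0$, the induced map on global sections is onto, so $s_1\sigma_{0j}=s_0\sigma_{1j}+\rho_j$ for some $\sigma_{1j}\in H^0(F)$ and $\rho_j\in H^0(F+m\alpha Y_p-2\Delta)$; this is \eqref{E027}, with $s_0\sigma_{1j}-s_1\sigma_{0j}=-\rho_j$.

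It remains to arrange that $\{\sigma_{1j}\}$ is a basis of the linear system $V_1$ of those $\sigma\in H^0(F)$ with $\sigma|_{S_1}\in H^0(S_1,F-2\Delta)$. Restricting $s_0\sigma_{1j}-s_1\sigma_{0j}=-\rho_j$ to $S_1$, using $s_1|_{S_1}=0$, that $s_0$ is invertible near $S_1\cap\Delta$, and that $\rho_j$ vanishes to order $\ge 2$ along $\Delta$, one gets that $\sigma_{1j}|_{S_1}$ is divisible by $(u|_{S_1})^2$, i.e. $\sigma_{1j}\in V_1$. Moreover $\sigma_{1j}$ is determined by $\sigma_{0j}$ only modulo $H^0(F-2\Delta)$, which is a common subspace of $V_0$ and $V_1$, so the construction descends to a linear map $\bar\Phi\colon V_0/H^0(F-2\Delta)\to V_1/H^0(F-2\Delta)$; running the same construction with $s_0$ and $s_1$ interchanged gives $\bar\Psi$ in the opposite direction, and adding the two defining relations shows $\bar\Psi\bar\Phi=\mathrm{id}$ and $\bar\Phi\bar\Psi=\mathrm{id}$, so $\bar\Phi$ is an isomorphism and $\dim V_1=\dim V_0$. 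Finally one chooses the basis $\{\sigma_{0j}\}$ of $V_0$ so that a subset of it is a basis of $H^0(F-2\Delta)$: for those indices take $\sigma_{1j}:=\sigma_{0j}$ (then \eqref{E027} reads $(s_0-s_1)\sigma_{0j}\in H^0(F+m\alpha Y_p-2\Delta)$, clear since $\sigma_{0j}$ vanishes to order $\ge 2$ along $\Delta$), and for the remaining indices use the $\sigma_{1j}$ constructed above; a dimension count via $\bar\Phi$, plus projection of a linear relation to $V_1/H^0(F-2\Delta)$, shows $\{\sigma_{1j}\}$ is a basis of $V_1$.

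I expect the main obstacle to be not the core construction — exhibiting $\sigma_{1j}$ through $\mathcal{G}$ is essentially immediate once one notes that \eqref{E264} places $\sigma_{0j}$ in $\mathcal{G}$ and that $H^1(F-2\Delta)=0$ — but the bookkeeping in the last paragraph needed to upgrade the family $\{\sigma_{1j}\}$ to an honest basis of $V_1$ (rather than a family merely spanning $V_1$ modulo $H^0(F-2\Delta)$), together with the minor point of checking that the argument of \lemref{LEM904} survives the asymmetric coefficients of $L$ and $\pi_B^*M$ appearing in $F$.
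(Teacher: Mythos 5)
Your proof is correct and is essentially the paper's own argument in disguise: your subsheaf $\mathcal{G}$ is precisely $I_{F_0}\tensor\CO(F+m\alpha Y_p)$ for $F_0 = S_0\cap 2\Delta$, your short exact sequence is the Koszul complex \eqref{E266} twisted by $\CO(F+m\alpha Y_p)$ (using $S_0\sim m\alpha Y_p$), and the surjectivity on $H^0$ is deduced from the same vanishing $H^1(F-2\Delta)=0$ via \lemref{LEM904}. The basis completion at the end — fixing a subset $\{\sigma_{0j}: j\in J_\Delta\}$ that spans $H^0(F-2\Delta)$, taking $\sigma_{1j}=\sigma_{0j}$ there, and noting that the remaining $\sigma_{1j}$ span $\Sigma_1/H^0(F-2\Delta)$ — matches the paper's \eqref{E273}--\eqref{E274}, with your $\bar\Phi,\bar\Psi$ merely making explicit the isomorphism on quotients that the paper leaves implicit.
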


\begin{proof}
Let $F_0$ be the subscheme of $Y$ given by $F_0 = S_0\cap 2\Delta$. Then we have the
Koszul complex for the ideal sheaf $I_{F_0}$ of $F_0\subset Y$: 
\begin{equation}\label{E266}
0\xrightarrow{} \CO(-S_0 - 2\Delta) \xrightarrow{} \CO(-S_0)\oplus \CO(-2\Delta)
\xrightarrow{} I_{F_0} \xrightarrow{} 0
\end{equation}
Obviously, 
\begin{equation}\label{E272}
\Sigma_0 = H^0(\CO_Y(m (1+\varepsilon) L + m \pi_B^* M)\tensor I_{F_0})
\end{equation}
is exactly the linear system $\Span\{\sigma_{0j}\}$ generated by $\{\sigma_{0j}\}$. By \lemref{LEM904},
\begin{equation}\label{E268}
\begin{split}
&\quad H^1(m (1+\varepsilon) L + m \pi_B^* M + m\alpha Y_p - S_0 - 2\Delta)\\
&= H^1(m (1+\varepsilon) L + m \pi_B^* M - 2\Delta) = 0
\end{split}
\end{equation}
Therefore, $AF+BG$ holds for
\begin{equation}\label{E267}
s_1\sigma_{0j} \in H^0(\CO_Y(m (1+\varepsilon) L + m \pi_B^* M + m\alpha Y_p)\tensor I_{F_0})
\end{equation}
That is,
\begin{equation}\label{E269}
s_1\sigma_{0j} = s_0 \sigma_{1j} + s_\Delta^2 l_j
\end{equation}
for some $\sigma_{1j}$, where $\Delta = \{s_\Delta = 0\}$ and $l_j$ is a section of
\begin{equation}\label{E270}
\CO_Y(m (1+\varepsilon) L + m \pi_B^* M + m\alpha Y_p - 2\Delta)
\end{equation}
And \eqref{E027} follows.
Obviously, $\sigma_{1j}$ are members of the linear sytem
\begin{equation}\label{E271}
\Sigma_1 = H^0(\CO_Y(m (1+\varepsilon) L + m \pi_B^* M)\tensor I_{F_1})
\end{equation}
where $I_{F_1}$ is the ideal sheaf of the subscheme $F_1 = S_1\cap 2\Delta\subset Y$. 
It is obvious that
\begin{equation}\label{E273}
H^0(m (1+\varepsilon) L + m \pi_B^* M - 2\Delta) \subset \Sigma_0\cap \Sigma_1
\end{equation}
It is not hard to see that $\{\sigma_{1j}\}$ spans the quotient
\begin{equation}\label{E274}
\Sigma_1/H^0(m (1+\varepsilon) L + m \pi_B^* M - 2\Delta) = \Span\{\sigma_{1j}\}
\end{equation}
Without loss of generality, we may assume that $\{\sigma_{0j}: j\in J\}$ contains a subset
$\{\sigma_{0j}: j\in J_\Delta\}$ which is a basis of $H^0(m (1+\varepsilon) L + m \pi_B^*
M - 2\Delta)$, where $J_\Delta\subset J$. Then it is enough to choose $\sigma_{1j} =
\sigma_{0j}$ for $j\in J_\Delta$. Combining this with \eqref{E274}, we see that
$\{\sigma_{1j}\}$ is a basis of $\Sigma_1$.
\end{proof}

Let $\sigma_{1j}$ be the sections given in the above lemma. Together with $\{\sigma_{0j}\}$
we have the Bergman metric associated to $\{\sigma_{ij}: 0\le i\le 1, j\in J\}$
\begin{equation}\label{E262}
\gamma = \frac{\sqrt{-1}}{2\pi} \partial \ol\partial \log \left(\sum_{ij}
|\sigma_{ij}|^2\right)
\end{equation}
And we let 
\begin{equation}\label{E263}
\eta = \gamma - w
\end{equation}

\begin{prop}\label{PROP002}
Let $\Sigma_i$ be the linear system generated by $\{\sigma_{ij}: j\in J\}$ as in
\eqref{E272} and \eqref{E271}. For each $i$, the base locus of $\Sigma_i$ is contained in
$(Y_Q \cup S_i)\cap \Delta$, where $Q = \pi(D_{\sing})\subset B$ is the finite set defined in A5
and $Y_Q = \pi_B^{-1}(Q)$.
\end{prop}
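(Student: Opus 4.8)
The plan is to analyze the base locus of $\Sigma_i$ fiber by fiber over $B$, using the fact (established in the subsection on numerical results) that $\pi_X\colon \Delta \to X$ is the blowup of $X$ along $D_{\sing}$ and that $L|_\Delta = \pi_X^*(\omega_{X/B}+D) - E$ with $E = \sum_{q\in D_{\sing}} E_q$. The key observation is that $\Sigma_i$ is cut out inside the full linear system $|m(1+\varepsilon)L + m\pi_B^*M|$ by the single vanishing condition $\sigma|_{S_i} \in H^0(S_i, m(1+\varepsilon)L + m\pi_B^*M - 2\Delta)$, i.e. tangency to $S_i$ along $S_i\cap\Delta$. So a point $y\in Y$ lies in $\Bs(\Sigma_i)$ only if either (a) $y$ already lies in the base locus of the ambient system $|m(1+\varepsilon)L + m\pi_B^*M|$, or (b) $y\in S_i\cap\Delta$ and every section vanishing doubly on $S_i$ along $S_i\cap\Delta$ also vanishes at $y$, or (c) the tangency condition is nonempty but fails to be base-point free away from $S_i$. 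I would dispose of (a) first: by \lemref{LEM904} (with $M$ sufficiently ample) and the remark following it, $L+\pi_B^*M$ is BPF, indeed $mL - k\Delta + \pi_B^*M$ is BPF for $m>k$, so $m(1+\varepsilon)L + m\pi_B^*M$ is BPF and case (a) contributes nothing.

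Next I would handle the tangency condition itself via the Koszul-complex argument already used in the proof of \lemref{LEM003}. Writing $F_i = S_i\cap 2\Delta$ with ideal sheaf $I_{F_i}$, we have $\Sigma_i = H^0(\CO_Y(m(1+\varepsilon)L + m\pi_B^*M)\otimes I_{F_i})$, and the Koszul resolution \eqref{E266} together with the vanishing \eqref{E268} shows that the restriction map
\begin{equation*}
H^0(\CO_Y(m(1+\varepsilon)L + m\pi_B^*M)\otimes I_{2\Delta}) \oplus H^0(\CO_Y(\cdots)\otimes I_{S_i}) \longrightarrow \Sigma_i
\end{equation*}
is surjective onto the relevant pieces; more precisely, away from $S_i$ the system $\Sigma_i$ contains $H^0(m(1+\varepsilon)L + m\pi_B^*M - 2\Delta) = H^0(m(1+\varepsilon)L - 2\Delta + m\pi_B^*M)$ as a sub-system (after twisting down by $S_i$), and by the remark after \lemref{LEM904} the bundle $(1+\varepsilon)L - 2\Delta$ is still big and NEF, so $m(1+\varepsilon)L - 2\Delta + m\pi_B^*M$ is BPF for $m\gg 0$. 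Hence outside $S_i$ the only possible base points of $\Sigma_i$ are forced to lie on $\Delta$, and in fact only on those components $E_q$ of $\Delta$ over which the double-vanishing along $2\Delta$ genuinely obstructs base-point-freeness — these sit over the blown-up points $D_{\sing}$, whose images in $B$ form exactly the set $Q = \pi(D_{\sing})$. This pins $\Bs(\Sigma_i)\setminus S_i$ inside $Y_Q\cap\Delta$.

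Finally I would deal with the points of $S_i\cap\Delta$ not over $Q$: here one must exhibit, for each such $y$, a section in $\Sigma_i$ not vanishing at $y$. Since $S_i$ is (the pullback to $Y$ of) a general member of a pencil of $m\alpha Y_p$, it is a general fiber-type divisor, and over $B\setminus Q$ the scheme $\Delta$ is smooth with $\Delta\to X$ an isomorphism, so $S_i\cap\Delta$ is a smooth curve there. On the smooth model, the tangency condition along $S_i\cap\Delta$ is a transverse linear condition, and a section of the BPF system $m(1+\varepsilon)L + m\pi_B^*M$ restricting to a general section of $S_i$ will not be forced to vanish at $y$; equivalently, one checks the restriction $\Sigma_i \to H^0(\text{fiber of }\Delta\text{ over }S_i\cap\Delta\setminus Y_Q)$ is surjective using \lemref{LEM904} once more. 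The main obstacle I anticipate is this last step: controlling exactly which points of $S_i\cap\Delta$ fail the base-point-free property requires a careful local computation near $D_{\sing}$ (to see that the obstruction is concentrated over $Q$) and near $S_i\cap\Delta$ over $B\setminus Q$ (to see there is no obstruction there), rather than a purely cohomological argument; the clean vanishing statements in \lemref{LEM904} handle the global part, but the precise location of the residual base locus is a local-analytic matter along $\Delta$.
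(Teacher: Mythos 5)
Your first paragraph matches the paper's opening move: since sections of $m(1+\varepsilon)L + m\pi_B^*M - 2\Delta$ multiplied by $s_\Delta^2$ lie in $\Sigma_i$, and that system is BPF by \lemref{LEM904} (for $M$ sufficiently ample), $\Bs(\Sigma_i)\subset\Delta$. (A small caveat: your inference ``big and NEF, so BPF for $m\gg 0$'' is not valid in general; what the paper actually uses is relative base-point-freeness over $B$ plus $H^1$ vanishing, which gives BPF directly. You reach the right conclusion by the wrong route.)

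The genuine gap is the step after that. You assert that on $\Delta\setminus S_i$ the base points can only sit ``on those components $E_q$ of $\Delta$ over which the double-vanishing along $2\Delta$ genuinely obstructs base-point-freeness --- these sit over the blown-up points $D_{\sing}$,'' and later you say this requires a local-analytic computation near $D_{\sing}$ that you do not carry out. That is precisely the content of the proposition, and it does not follow from anything you have written; ``the obstruction is concentrated over $Q$'' is the thing to be proved, not a reformulation of it. The paper handles this cohomologically rather than locally. From the exact sequence
\begin{equation*}
0 \to \CO_Y(m(1{+}\varepsilon)L + m\pi_B^*M - 2\Delta) \to \CO_Y(m(1{+}\varepsilon)L + m\pi_B^*M)\otimes I_{F_i} \to \CO_\Delta(mG)\otimes \CO_Y/I_\Delta^2 \to 0
\end{equation*}
and the $H^1$ vanishing of \lemref{LEM904}, $\Sigma_i$ surjects onto $H^0(\CO_\Delta(mG)\otimes\CO_Y/I_\Delta^2)$. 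Truncating to $H^0(\CO_\Delta(mG))$ and restricting to a fiber $\Delta_b$ with $b\notin Q$ (which is $\cong\P^1$ because the blowup $\Delta\to X$ is an isomorphism there), one computes $h^1(\Delta_b,\CO_\Delta(mG-\Delta)) = h^1(\CO_{\P^1}((m\varepsilon-1)\alpha)) = 0$ via \eqref{E011} and \eqref{E918}, so the truncation map is surjective on that fiber and $\CO_\Delta(mG)|_{\Delta_b}\cong\CO_{\P^1}(m\varepsilon\alpha)$ is BPF. This is where $b\notin Q$ enters in a hard way: over $Q$ the fiber $\Delta_b$ is not $\P^1$ (it picks up exceptional components), and the computation fails. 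Your sketch never isolates this fiberwise $h^1$ vanishing, which is the actual engine; instead it gestures at a local analysis near $D_{\sing}$ that is not needed and that, if attempted, would not obviously close the gap. The parenthetical ``(after twisting down by $S_i$)'' in your middle paragraph is also spurious; no twist by $S_i$ occurs when embedding $H^0(m(1+\varepsilon)L + m\pi_B^*M - 2\Delta)$ into $\Sigma_i$.
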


\begin{proof}
Since $H^0(m (1+\varepsilon) L + m \pi_B^* M - 2\Delta) \subset \Sigma_i$, the base locus
$\Bs(\Sigma_i)$ of $\Sigma_i$ is contained in $\Delta$ by \lemref{LEM904}. 
So it suffices to show that $\Bs(\Sigma_i)\subset Y_Q\cup S_i$.

Let $F_i =
S_i\cap 2\Delta$ be the subscheme of $Y$ defined in the proof of \lemref{LEM003}. We have
the exact sequence
\begin{equation}\label{E815}
\begin{split}
0 &\xrightarrow{} \CO_Y(m (1+\varepsilon) L + m \pi_B^* M - 2\Delta) \\
& \xrightarrow{}
\CO_Y(m (1+\varepsilon) L + m \pi_B^* M)\tensor I_{F_i}\\
& \xrightarrow{}
\underbrace{\CO_\Delta(m (1+\varepsilon) L + m \pi_B^* M - S_i)}_{\CO_\Delta(mG)} \tensor \CO_Y/I_\Delta^2 
\xrightarrow{} 0
\end{split}
\end{equation}
where $I_\Delta = \CO_Y(-\Delta)$ is the ideal sheaf of $\Delta\subset Y$. Again by
\lemref{LEM904}, $H^1(\CO_Y(m (1+\varepsilon) L + m \pi_B^* M - 2\Delta)) = 0$ and hence
we have the surjection
\begin{equation}\label{E290}
\begin{split}
\Sigma_i & \twoheadrightarrow H^0(\CO_\Delta(m (1+\varepsilon) L + m \pi_B^* M - S_i)
\tensor \CO_Y/I_\Delta^2)\\
&\quad \isom H^0(\CO_\Delta(mG) \tensor \CO_Y/I_\Delta^2)
\end{split} 
\end{equation}
Composing the above map with
\begin{equation}\label{E816}
\begin{split}
\varphi: H^0(\CO_\Delta(mG) \tensor \CO_Y/I_\Delta^2) &\to H^0(\CO_\Delta(mG)\tensor
\CO_Y/I_\Delta)\\
&\quad = H^0(\CO_\Delta(mG))
\end{split}
\end{equation}
we have a natural map 
\begin{equation}\label{E818}
f: \Sigma_i \to H^0(\CO_\Delta(mG)).
\end{equation}
To show that $\Bs(\Sigma_i)\subset Y_Q \cup S_i$, it is
enough to show that
\begin{equation}\label{E817}
\Bs(f(\Sigma_i)) \subset Y_Q
\end{equation}
which is equivalent to
\begin{equation}\label{E291}
\Bs(\IM(\varphi)) \subset Y_Q
\end{equation}
by \eqref{E290}. For $M\subset B$ sufficiently ample, we have the diagram
\begin{equation}\label{E819}
\xymatrix{
H^0(\CO_\Delta(mG) \tensor \CO_Y/I_\Delta^2)\ar[d]^\varphi \ar@{>>}[r] & H^0(\Delta_b, \CO_\Delta(mG)
\tensor \CO_Y/I_\Delta^2) \ar[d]^{\varphi_b}\\
H^0(\CO_\Delta(mG)) \ar@{>>}[r] & H^0(\Delta_b, \CO_\Delta(mG))}
\end{equation}
with rows being surjections when we restrict $\varphi$ to each fiber $\Delta_b$ of $\Delta/B$ for $b\not\in Q$.
Therefore, it suffices to show that
\begin{equation}\label{E820}
\Bs(\IM(\varphi_b)) = \emptyset
\end{equation}
for all $b\not\in Q$. This is more or less obvious since
we have the exact sequence
\begin{equation}\label{E941}
\xymatrix{0 \ar[r] & I_\Delta /I_\Delta^2 \ar[r]\ar@{=}[d] & \CO_Y/I_\Delta^2 \ar[r]
& \CO_Y/I_\Delta \ar[r] \ar@{=}[d] & 0\\
& \CO_\Delta(-\Delta) & & \CO_\Delta}
\end{equation}
When we tensor the sequence by $\CO_\Delta(mG)$ and restrict it to $\Delta_b\isom \P^1$ with
$b\not\in Q$, we have
\begin{equation}\label{E821}
h^1(\Delta_b, \CO_\Delta(mG - \Delta)) = h^1(\CO_{\P^1}((m\varepsilon - 1)\alpha)) = 0 
\end{equation}
by \eqref{E011} and \eqref{E918}. Consequently, $\varphi_b$ is surjective and
\begin{equation}\label{E942}
\Bs(\IM(\varphi_b)) = \Bs(H^0(\Delta_b, \CO_\Delta(mG))) =
\Bs(H^0(\CO_{\P^1}(m\varepsilon\alpha))) = \emptyset
\end{equation}
\end{proof}

\begin{rem}\label{REM001}
It is not hard to see that the above proposition continues to hold
with tangency $2$ replaced by any $\mu \le m\varepsilon$. Moreover,
being a little more careful, we can actually show that
\begin{equation}\label{E822}
\Bs(\Sigma_i) = \wt{X}_Q \cup (S_i\cap \Delta)
\end{equation}
where $\wt{X}_Q\subset \Delta$ is the proper transform of $X_Q = \pi^{-1}(Q)$ under the
map $\Delta\to X$. However, we have no need for these here.
\end{rem}

By the above proposition, we see that the base locus of
$\{\sigma_{ij}: i,j\}$ is supported on $Y_Q\cap \Delta$.
Consequently, $\gamma$ is a closed $(1,1)$ current which is $C^\infty$ on $Y\backslash
(Y_Q\cap \Delta)$.
By \eqref{E025},
\begin{equation}\label{E028}
\begin{split}
-mG\cdot \Gamma_n &\le -\int_{\Gamma_n} \eta = -\int_{\Gamma_n \backslash U} \eta
- \int_{\Gamma_n \cap U} \eta\\
&\le \int_{\Gamma_n \backslash U} w - \int_{\Gamma_n\cap U} \eta
\end{split}
\end{equation}
The fact that the first integral has order of $O(\deg C_n)$ is a consequence of the
following lemma.

\begin{lem}\label{LEM903}
Let $U\subset Y$ be an open neighborhood of $\Delta$, $w$ be a smooth $(1,1)$ form
on $X$ and $\kappa$ be a positive smooth $(1,1)$ form on $B$.
Then there exists a constant $A_U > 0$ such that at every point $(p, v) \in Y\backslash U$
\begin{equation}\label{E277}
\big|\langle w, v\wedge \ol{v}\rangle\big|
\le A_U \langle \pi^* \kappa, v\wedge \ol{v}\rangle
\end{equation}
where $p\in X$ and $v\in T_{X,p}(-\log D)$.
\end{lem}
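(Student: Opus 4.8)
The plan is to turn the pointwise bound \eqref{E277} into a compactness statement on $Y\backslash U$. Since both sides of \eqref{E277} are homogeneous of degree $2$ in $v$, it suffices to bound the quotient
\[
\Phi(p,v)=\frac{|\langle w,v\wedge\ol v\rangle|}{\langle\pi^*\kappa,v\wedge\ol v\rangle},
\]
which is scale invariant in $v$ and hence descends to a function on the open subset of $Y$ where the denominator is nonzero. The pairings are to be read through the natural map $T_X(-\log D)\to T_X$ dual to $\Omega_X\hookrightarrow\Omega_X(\log D)$: writing $u\in T_{X,p}$ for the image of $v$, one has $\langle w,v\wedge\ol v\rangle=\langle w,u\wedge\ol u\rangle$ and $\langle\pi^*\kappa,v\wedge\ol v\rangle=\langle\kappa,d\pi(u)\wedge\ol{d\pi(u)}\rangle\ge 0$, the last inequality because $\kappa>0$. (Equivalently, after fixing a Hermitian metric on $T_X(-\log D)$ one may work with $\Phi$ on the compact unit sphere bundle; the homogeneity makes the two viewpoints interchangeable.)

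The heart of the matter is the identification of the zero locus of the denominator. Since $\kappa>0$, $\langle\pi^*\kappa,v\wedge\ol v\rangle=0$ if and only if $d\pi(u)=0$, i.e.\ $u$ is tangent to the fibre of $\pi$ through $p$ (this includes $u=0$), and I claim this locus is exactly $\Delta$. If $p\notin D_{\sing}$, then $T_{X,p}(-\log D)\to T_{X,p}$ is an isomorphism when $p\notin D$ and has one–dimensional image (the tangent line to $D$) and one-dimensional kernel when $p\in D$; since $D$ is a union of sections of $X/B$, that image line is transverse to the fibre direction, so in either case the condition $d\pi(u)=0$ cuts out a single point of $\pi_X^{-1}(p)$, and one checks using \lemref{LEM902} that this point is precisely $\Delta\cap\pi_X^{-1}(p)$, the lift of the fibre $X_{\pi(p)}$ through $p$ (whose image in $T_{X,p}$ is the vertical vector when $p\notin D$ and is $0$ when $p\in D$). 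If $p\in D_{\sing}$, then $T_{X,p}(-\log D)\to T_{X,p}$ is the zero map, so $d\pi(u)=0$ for every $v$; on the other hand $\Delta\cap\pi_X^{-1}(p)=E_p$ is the exceptional curve of the blowup $\pi_X\colon\Delta\to X$ over $p$, a reduced $\P^1$ sitting inside the $\P^1$-fibre $\pi_X^{-1}(p)$, hence all of it. In all cases the zero locus of $\langle\pi^*\kappa,v\wedge\ol v\rangle$ on $Y$ equals $\Delta$.

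Granting this, the lemma follows: $U$ is an open neighbourhood of $\Delta$, so $Y\backslash U$ is compact and $\langle\pi^*\kappa,v\wedge\ol v\rangle$ is strictly positive there; hence $\Phi$ is a well-defined continuous nonnegative function on the compact set $Y\backslash U$, and we take $A_U=\sup_{Y\backslash U}\Phi<\infty$. I expect the only genuinely delicate step to be the claim that the denominator vanishes along \emph{all} of $\Delta$ and nowhere else, particularly over $D_{\sing}$, where $T_X(-\log D)\to T_X$ degenerates to zero and one must use that the exceptional curve of $\Delta\to X$ fills up the corresponding fibre of $Y\to X$; this is exactly the local analysis already invoked to identify $\Delta$ with the blowup of $X$ along $D_{\sing}$. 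Everything else is a soft compactness argument, and if one prefers to avoid discussing continuity of $\Phi$ across $\pi_X^{-1}(D)$ one can argue by contradiction instead: a sequence $(p_k,v_k)\in Y\backslash U$ with $\Phi(p_k,v_k)\to\infty$ has a subsequence converging to some $(p_\ast,v_\ast)\in Y\backslash U$, where the denominator is positive, a contradiction.
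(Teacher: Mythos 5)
Your proof is correct and follows essentially the same route as the paper's: form the scale-invariant ratio $\Phi(p,v) = \big|\langle w, v\wedge\ol v\rangle\big| / \langle\pi^*\kappa, v\wedge\ol v\rangle$, observe that the denominator vanishes precisely along $\Delta$ (the paper attributes this to Lemma~\ref{LEM902}, whereas you carry out the local analysis explicitly, including the case $p\in D_{\sing}$ where the exceptional $\P^1$ of $\Delta\to X$ fills the whole fibre of $Y\to X$), and then invoke compactness of $Y\backslash U$. Your worry about continuity of $\Phi$ across $\pi_X^{-1}(D)$ is unnecessary: $T_X(-\log D)\to T_X$ is a morphism of locally free sheaves, so both pairings are continuous on all of the total space of $T_X(-\log D)$ and the ratio is continuous wherever the denominator is nonzero.
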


\begin{proof}
By \lemref{LEM902}, $\langle \pi^* \kappa, v\wedge \ol{v}\rangle$ does not vanish for
$(p,v)\not\in \Delta$ and hence the function
\begin{equation}\label{E278}
f(p, v) = \frac{\langle w, v\wedge \ol{v}\rangle}{\langle \pi^* \kappa, v\wedge
  \ol{v}\rangle}
\end{equation}
is continuous on $Y\backslash \Delta$. Then \eqref{E277} follows from the compactness of
$Y\backslash U$.
\end{proof}

Note that $w$ is the pullback of a form on $X$; indeed, it is the pullback of a form on $\P^1$.
So \lemref{LEM903} applies and we conclude that
\begin{equation}\label{E029}
w \le A_U \pi_B^* \kappa
\end{equation}
on $\Gamma_n \backslash U$ for some constant $A_U$ depending only on $U$, where we
choose $\kappa$ to be a positive $(1,1)$ form on $B$ representing $c_1(\CO_B(b))$ for a point $b\in B$. Therefore,
\begin{equation}\label{E279}
\int_{\Gamma_n \backslash U}  w \le A_U \int_{\Gamma_n} \pi_B^*\kappa = A_U \deg(C_n)
\end{equation}
Next, we claim that $\eta > 0$ everywhere on $\Delta\backslash Y_Q$.

\begin{lem}\label{LEM001}
The current $\eta > 0$ at every point $p\in \Delta\backslash Y_Q$.
\end{lem}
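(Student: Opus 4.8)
The goal is to show that $\eta = \gamma - w$ is strictly positive as a current at each point $p \in \Delta \setminus Y_Q$. The plan is to work locally at such a point and compare the two Bergman metrics directly via the defining sections. Recall $w$ is the Bergman form attached to the pencil $\{s_0, s_1\}$ of $m\alpha Y_p$ and $\gamma$ is the Bergman form attached to $\{\sigma_{ij}\}$ spanning $m(1+\varepsilon)L + m\pi_B^* M$. By Proposition \ref{PROP002} neither base locus meets $\Delta \setminus Y_Q$, so both forms are smooth near $p$ and the computation makes sense pointwise.

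First I would reduce to comparing local weights. Writing $\varphi_s = \log(|s_0|^2 + |s_1|^2)$ and $\varphi_\sigma = \log(\sum_{ij} |\sigma_{ij}|^2)$ in a local trivialization, we have $w = \tfrac{\sqrt{-1}}{2\pi}\partial\bar\partial \varphi_s$ and $\gamma = \tfrac{\sqrt{-1}}{2\pi}\partial\bar\partial\varphi_\sigma$, so $\eta = \tfrac{\sqrt{-1}}{2\pi}\partial\bar\partial(\varphi_\sigma - \varphi_s)$. The key structural input is Lemma \ref{LEM003}: for each $j$, $s_0\sigma_{1j} - s_1\sigma_{0j}$ vanishes to order $2$ along $\Delta$. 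This says that on $\Delta$ the ratio $\sigma_{1j}/\sigma_{0j}$ agrees with $s_1/s_0$ to first order, i.e. the map to projective space given by $\{\sigma_{ij}\}$ restricted to a neighborhood of $\Delta$ factors, modulo the extra $J$-coordinates, through the map given by $\{s_0, s_1\}$ up to a first-order tangency along $\Delta$. The plan is to choose a local coordinate $t$ along $\Delta$ (a defining function for $\Delta$, so $\Delta = \{t = 0\}$) together with coordinates along $\Delta$, and use the tangency to show $\varphi_\sigma - \varphi_s$ has a specific normal form.

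The main computation: near $p$, set $u_j = \sigma_{0j}/\sigma_{0\,j_0}$ for a reference index $j_0$ with $\sigma_{0\,j_0}(p) \neq 0$ (possible since $p \notin \Bs(\Sigma_0)$), and note $s_0/s_1$ is a local unit times a power of the defining equation of $S_0$; away from $S_0$ we can also normalize. Using \eqref{E027}, one gets $\sigma_{1j} = (s_1/s_0)\sigma_{0j} + t^2(\text{holomorphic})$ locally (after absorbing the trivialization of $m\alpha Y_p$), hence
\begin{equation}\label{E-eta-local}
\sum_{ij}|\sigma_{ij}|^2 = (1 + |s_1/s_0|^2)\sum_j |\sigma_{0j}|^2 + O(|t|^2)\cdot(\text{smooth positive}).
\end{equation}
Taking $\partial\bar\partial\log$ of both sides and subtracting $\partial\bar\partial\log(|s_0|^2+|s_1|^2) = \partial\bar\partial\log\bigl((1+|s_1/s_0|^2)|s_0|^2\bigr)$, the factor $(1+|s_1/s_0|^2)$ and the pole of $|s_0|^2$ along $S_0$ cancel in $\eta$, and we are left with $\eta = \tfrac{\sqrt{-1}}{2\pi}\partial\bar\partial\log\bigl(\sum_j|u_j|^2 + |t|^2 h\bigr)$ for a smooth positive function $h$, plus the already-positive Bergman form of the sublinear system $\{\sigma_{0j}\}$ in the directions along $\Delta$. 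The term $|t|^2 h$ is strictly plurisubharmonic in the normal direction $t$ at $t = 0$, contributing a strictly positive $\tfrac{\sqrt{-1}}{2\pi} dt \wedge d\bar t$ component, while the $\sum_j|u_j|^2$ part — the pulled-back Fubini–Study form of $\Bs(\Sigma_0)^c$ restricted to $\Delta$ — is semipositive and, by \eqref{E011} and \eqref{E915}, strictly positive along $\Delta \setminus Y_Q$ in the directions tangent to $\Delta$, because there $mG|_\Delta = \varepsilon m\pi_X^*(\omega_{X/B}+D) + m\pi_B^*M - (1+\varepsilon)mE$ restricts to an ample bundle on each fiber $\Delta_b \cong \P^1$ (cf. \eqref{E821}–\eqref{E942}) and to something ample along the base after the twist by $M$. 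Combining the strict positivity in the $t$-direction with the strict positivity along $\Delta$ gives $\eta > 0$ at $p$.

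The hard part will be making the normal-form reduction \eqref{E-eta-local} precise: one must verify that the order-two vanishing in Lemma \ref{LEM003} genuinely produces the clean splitting into a "along-$\Delta$ Fubini–Study part" plus a "$|t|^2$ normal part" with no lower-order cross terms that could destroy positivity, and one must handle the two trivializations (of $m(1+\varepsilon)L + m\pi_B^*M$ and of $m\alpha Y_p$) consistently so that the $S_0$-pole really cancels. A cleaner way to organize this, which I would actually pursue, is to argue on the blow-up resolving both linear systems: on the modification $\tilde Y \to Y$ the sections $\{\tilde\sigma_{ij}\}$ and $\{\tilde s_i\}$ become base-point-free near the preimage of $\Delta\setminus Y_Q$, and the statement $\eta > 0$ becomes the statement that the map given by $\{\tilde\sigma_{ij}\}$ is an immersion transverse to the relevant directions there — which follows from (a) Lemma \ref{LEM902} characterizing $\Delta$ as the locus where fiber-tangency is detected, giving immersivity in the fiber direction, (b) the tangency in Lemma \ref{LEM003} giving the extra normal positivity along $\Delta$, and (c) the ampleness computation \eqref{E011} giving positivity along $\Delta$ itself. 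I expect step (b) to be where the real care is needed.
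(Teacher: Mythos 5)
Your overall skeleton---comparing the two Bergman weights locally at $p$ and feeding in the order-two tangency of Lemma \ref{LEM003}---is the same as the paper's, but the step you yourself flag as delicate is wrong, and it is the crux. Writing $\sigma_{1j}=(s_1/s_0)\sigma_{0j}+\delta_j$ near $p$ (with $s_0(p)\neq 0$, say), Lemma \ref{LEM003} gives $\delta_j=s_\Delta^2\, l_j$, so in a local coordinate $t$ with $\Delta=\{t=0\}$ the discrepancy you record as ``$O(|t|^2)\cdot(\text{smooth positive})$'' is actually
\begin{equation*}
\sum_j\Bigl(2\,\mathrm{Re}\bigl((s_1/s_0)\sigma_{0j}\,\bar{\delta}_j\bigr)+|\delta_j|^2\Bigr)
= t^2\bar{A}+\bar{t}^{\,2}A+|t|^4B,
\end{equation*}
a signed quantity, not $|t|^2h$ with $h>0$. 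Its mixed derivative $\partial_t\partial_{\bar t}$ vanishes at $t=0$; that vanishing is the entire point of demanding tangency to order \emph{two} in Lemma \ref{LEM003}, and it is what the paper exploits in \eqref{E280}--\eqref{E286}: the correction term contributes nothing to $\eta$ along $\Delta$. It does not produce a strictly plurisubharmonic $|t|^2$ term, so your claimed source of positivity of $\eta$ in the direction normal to $\Delta$ does not exist, and your argument has no substitute for it.

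Where the paper actually gets positivity in all directions (normal as well as tangential): after the cancellation, $f^*\eta$ at $q$ equals the Fubini--Study form of the system $\{\sigma_{0j}\}$ pulled back along any curve germ $f\colon (C,q)\to (Y,p)$ with $f(C)\not\subset\Delta\cup W$, where $W$ is the null locus of the big and nef $L$ (see \eqref{E287}); strict positivity is then extracted from the sections themselves, using $\Sigma_0\supset H^0(m(1+\varepsilon)L+m\pi_B^*M-2\Delta)$, not from the tangency term. Your appeal to the fiberwise ampleness \eqref{E011}, \eqref{E821} yields only base-point-freeness/semipositivity of the restriction to $\Delta_b$, not the immersivity needed for strict positivity even in the tangential directions, so that half is also incomplete. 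Two smaller points: choosing $j_0$ with $\sigma_{0j_0}(p)\neq0$ is not justified by Proposition \ref{PROP002}, since $\Bs(\Sigma_0)$ may contain all of $S_0\cap\Delta$ (cf.\ \eqref{E822}); the paper instead normalizes by the pencil section $s_i$ with $s_i(p)\neq0$. Finally, the blow-up reformulation you sketch at the end does not avoid the problem: your item (b) there is exactly the false step.
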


By \eqref{E808}, there exists an open neighborhood $V$ of $Y_Q$ such that
\begin{equation}\label{E811}
\int_{\Gamma_n \cap V} w \le \varepsilon (m\alpha Y_p \cdot \Gamma_n)
\end{equation}
By the above lemma and the compactness of $\Delta\backslash V$,
we see that $\eta > 0$ in $U\backslash V$ for some open neighborhood $U$ of
$\Delta$. The second integral in \eqref{E028} becomes
\begin{equation}\label{E810}
\begin{split}
- \int_{\Gamma_n\cap U} \eta & \le - \int_{\Gamma_n\cap (U\backslash V)} \eta +
  \int_{\Gamma_n\cap V} w\\
& \le \varepsilon (m\alpha Y_p \cdot \Gamma_n) = m\varepsilon
  (\omega_{X/B} +D)\cdot C_n + O(\deg C_n)
\end{split}
\end{equation}
Combining \eqref{E279} and \eqref{E810}, we have
\begin{equation}\label{E812}
\begin{split}
&\quad -G\cdot \Gamma_n = \varepsilon (\omega_{X/B} +D)\cdot C_n + O(\deg C_n)\\
&\Rightarrow
-\big((1+\varepsilon) L + \pi_B^* M - (1-\varepsilon) \pi_X^*(\omega_{X/B} + D)\big)\cdot
\Gamma_n = O(\deg C_n)
\end{split}
\end{equation}
Replace $\varepsilon$ by $\varepsilon/(2+\varepsilon)$ and we are done.
It remains to verify \lemref{LEM001}.

\begin{proof}[Proof of \lemref{LEM001}]
At least one of $s_0(p)$ and $s_1(p)$ does not vanish.  
Let us assume that $s_0(p) \ne 0$ WLOG. Let $r_j = \sigma_{0j}/s_0$; $r_j$ is holomorphic
at $p$, of course. Let $\delta_j = \sigma_{1j} - s_1 r_j$. By our construction of
$\sigma_{1j}$, we see that $\delta_j$ vanishes to the order of $2$ along $\Delta$. We may
write
\begin{equation}\label{E280}
\begin{split}
\gamma &= \frac{\sqrt{-1}}{2\pi} \partial \ol\partial \log\left(\sum_{j} (|s_0 r_j|^2 + |s_1
r_j + \delta_j|^2) \right)\\
& = \underbrace{\frac{\sqrt{-1}}{2\pi} \partial \ol\partial \log (|s_0|^2 + |s_1|^2)}_w +
\frac{\sqrt{-1}}{2\pi} \partial \ol\partial \log \left(\sum_j |r_j|^2\right)\\
&\quad - \frac{\sqrt{-1}}{2\pi} \partial \ol\partial \log \left(1 + \sum_j \frac{s_1 r_j
  \ol{\delta}_j + \ol{s}_1 \ol{r}_j \delta_j + |\delta_j|^2}{(|s_0|^2 + |s_1|^2)\sum_j |r_j|^2}\right)
\end{split}
\end{equation}
Basically, we want to show that the last term in \eqref{E280} vanishes along
$\Delta$. Then
\begin{equation}\label{E281}
\eta\Big|_\Delta = \frac{\sqrt{-1}}{2\pi} \partial \ol\partial \log \left(\sum_j |r_j|^2\right)
\end{equation}
locally at $p$, which is positive.

Since $\eta$ is $C^\infty$ at $p$, it is enough to show that $\eta > 0$ at $p$ when $\eta$ is restricted
to every curve passing through $p$, i.e., to show that $f^* \eta > 0$ at $q$ for every
nonconstant morphism $f: C\to Y$ from a smooth and irreducible projective $C$ to $Y$ with
$f(q) = p$. Indeed, it is enough to show the following
\begin{quote}
for every tangent vector $\xi\in T_{Y, p}$, there exists a morphism $f:C\to Y$ from a smooth
irreducible curve $C$ to $Y$ with $f(q) = p$, $\xi \in f_* T_{C,q}$ and $f^* \eta
> 0$ at $q$.
\end{quote}
Therefore, we can also exclude the curves contained in a fixed proper subvariety of
$Y$. So we may assume that $f(C)\not\subset \Delta\cup W$, where $W\subsetneq Y$ is the
subvariety such that
\begin{equation}\label{E813}
L \cdot \Gamma = 0 \text{ for a curve } \Gamma\Leftrightarrow \Gamma\subset W
\end{equation}
Such $W$ exists because $L$ is big and NEF (see \remref{REM902}). Let
\begin{equation}\label{E282}
\hat \CO_{C,q} \isom \BC[[t]]
\end{equation}
be the formal local ring of $C$ at $q$ and $\mu$ be its valuation, i.e., $\mu(t^n) =
n$. Let
\begin{equation}\label{E283}
\mu(f^* s_\Delta) = \lambda
\end{equation}
where $\Delta = \{s_\Delta = 0\}$. Then $\mu(f^* \delta_j) \ge 2\lambda$.
And since $\{\sigma_{0j}\}$ and hence $\{r_j\}$ are BPF at $p$, we have
\begin{equation}\label{E285}
f^*\left(\frac{s_1 r_j
  \ol{\delta}_j + \ol{s}_1 \ol{r}_j \delta_j + |\delta_j|^2}{(|s_0|^2 + |s_1|^2)\sum_j
  |r_j|^2}\right) = O(t^{2\lambda} + \ol{t}^{2\lambda} + |t|^{4\lambda})
\end{equation}
Therefore, we obtain
\begin{equation}\label{E286}
\left.\frac{\sqrt{-1}}{2\pi} f^* \partial \ol\partial \log \left(1 + \sum_j \frac{s_1 r_j
  \ol{\delta}_j + \ol{s}_1 \ol{r}_j \delta_j + |\delta_j|^2}{(|s_0|^2 + |s_1|^2)\sum_j
  |r_j|^2}\right)
\right|_{t=0} = 0
\end{equation}
by the Taylor expansion of the LHS. Consequently,
\begin{equation}\label{E287}
\begin{split}
f^*\eta \Big|_q &= \left.\frac{\sqrt{-1}}{2\pi} f^* \partial \ol\partial \log \left(\sum_j
|r_j|^2\right)\right|_q\\
&= \left.\frac{\sqrt{-1}}{2\pi} \partial \ol\partial \log \left(\sum_j
|f^* \sigma_{0j} |^2\right)\right|_q\\
\end{split}
\end{equation}
Since $H^0(m (1+\varepsilon) L + m \pi_B^* M - 2\Delta)\subset \Sigma_0$ and
$f(C)\not\subset \Delta\cup W$, the linear system $f^* \Sigma_0$ is big on $C$. Therefore,
$f^* \eta > 0$ at $q$ and $\eta > 0$ at $p$.
\end{proof}


\begin{thebibliography}{99}
\bibitem[M]{M} M. McQuillan, Old and new techniques in function field arithmetic,
preprint.

\bibitem[V1]{V1} P. Vojta, Diophantine inequalities and Arakelov theory, In S. Lang,
{\it Introduction to Arakelov Theory\/}, (1988), Springer-Verlag, 155-178.

\bibitem[V2]{V2} P. Vojta, On algebraic points on curves, {\it Compositio Mathematica\/}
{\bf 78} (1991), 29-36.

\bibitem[Y]{Y} K. Yamanoi, The second main theorem for small functions and related
  problems, {\it Acta. Math.\/} {\bf 192} (2004), 225-294.
\end{thebibliography}
\end{document}